\newtheorem{theorem}{\bf Theorem}[section]
\newtheorem{lemma}[theorem]{\bf Lemma}
\newtheorem{cor}[theorem]{\bf Corollary}
\newtheorem{problem}[theorem]{\bf Problem}
\newtheorem{prop}[theorem]{\bf Proposition}
\newtheorem{nota}[theorem]{\bf Notation}
\newtheorem{claim}[theorem]{\bf Claim}
\newtheorem{remark}[theorem]{\bf Remark}
\newtheorem{defi}[theorem]{\bf Definition}
\title{A note on internal partitions: the $5$-regular case and beyond}
\date{}
\author{Pál Bärnkopf\thanks{Eötvös Loránd University, Budapest, Hungary.
E-mail: {\tt barpal@cs.bme.hu}}
\and  
Zolt\'an L\'or\'ant Nagy\thanks{MTA--ELTE Geometric and Algebraic Combinatorics Research Group,
  E\"otv\"os Lor\'and University, Budapest, Hungary. The author is supported by the Hungarian Research Grant (NKFI) No. K  120154 and 134953.  	E-mail: {\tt nagyzoli@cs.elte.hu}}
\and 
Zoltán Paulovics\thanks{Eötvös University, Budapest. The author is supported by the European Union, co-financed by the European Social Fund (EFOP-3.6.3-VEKOP-16-2017-00002)). Email: {\tt zoli.paulovics@gmail.com }}
}
\begin{document}

\maketitle

\begin{abstract}
  An {\em internal} or {\em friendly} partition of a graph is a partition of the vertex set into two nonempty sets so that every vertex has at least as many neighbours in its own class as in the other one. It has been shown that apart from finitely many counterexamples, every $3$, $4$ or $6$-regular graph has an internal partition. In this note we focus on the $5$-regular case and show  that among the subgraphs of minimum degree  at least $3$, there are some which have small intersection. We also discuss the existence of internal partitions in some families of Cayley graphs, notably we determine all $5$-regular Abelian Cayley graphs which do not have an internal partition.\\
 {\em Keywords: internal partition, friendly partition, cohesive set}    
\end{abstract}

\section{Introduction}

An {\em internal} or {\em friendly} partition of a graph is a partition of the vertices into two nonempty sets so that every vertex has at least as many neighbours in its own class as in the other one. The problem of finding or showing the existence of internal partitions in graphs has a long history. The same concept was introduced by Gerber and Kobler \cite{gerber2004classes} under the name of {\em satisfactory partitions}, while Kristiansen, Hedetniemi and Hedetniemi \cite{kristiansen2002introduction} considered a related problem on {\em graph alliances}. A survey of Bazgan, Tuza and Vanderpooten \cite{bazgan2010satisfactory} describes early results on the area and discusses the complexity of the problem as well as how to find such partitions. Let us denote by $d_G(v)$ the degree of vertex $v$ in graph $G$. For a set $U\subset V(G)$, $d_U(v)$ denotes the number of neighbors of $v$ in $U$.\\
Stiebitz \cite{stiebitz1996decomposing} proved that for every pair of functions
$a, b : V \rightarrow \mathbb{N}^+$ such that $ d_G(v) \geq a(v) +b(v) + 1 \ \forall v \in V$, there exists a partition of the vertex set 
$V(G) = A\cup B$, such that  $d_A(v) \geq  a(v) \  \forall v \in A$ and  $d_B(v) \geq b(v) \ \forall v \in B$.
This confirms a conjecture of Thomassen \cite{thomassen1983graph} in a strong form. 
Kaneko proved  \cite{kaneko1998decomposition} that if $G$ is triangle-free, then $d_A(v) \geq  a(v) \ \forall v \in A$ and $ d_B(v) \geq b(v)  \ \forall v \in B$ can be satisfied even with  $a, b : V \rightarrow \mathbb{N}^+$ such that $ d_G(v) \geq a(v) +b(v) \ \forall v \in V$. This also implies that triangle-free Eulerian graphs have internal partitions, and reveals that the difficulty of the problem is fairly different for regular graphs having odd or even valency.
The condition $ d_G(v) \geq a(v) +b(v) \ \forall v \in V$ cannot be assumed in general, since there are graphs, e.g. $K_{2n}$ which has no partition satisfying  $d_A(v) \geq  a(v$) $\forall v \in A$ and $ d_B(v) \geq b(v)$ $\forall v \in B$.
Likewise there exist infinitely many graphs having no internal partitions, e.g. $K_{2n}$ and $K_{2n+1,2n+1}$. However, several large classes of graphs have been shown to have internal partitions. Diwan proved \cite{diwan2000decomposing}   that if  a graph of girth at least $5$ has  minimum degree  at least $a+b-1$, then its vertex set has a suitable partition $A\cup B$ with minimum degrees $\delta_{G |_A}\geq a$ and $\delta_{G |_B}\geq b$, on the graph induced by $A$ and $B$, respectively.  Moreover,  Ma and Yang \cite{ma2019decomposing} showed that in the last statement of the 
theorem it suffices to assume that $G$ is $C_4$-free. 
Note however that graphs not having internal partitions do not have a forbidden subgraph characterization \cite{shafique2002satisfactory}.\\
  The main goal of this paper is to make a contribution in the case of regular graphs. DeVos posed the following problem.
  
  \begin{problem}[\cite{devos2009}]\label{mainDeVos}  Is it true that all but finitely many $r$-regular graphs have friendly (internal) partitions?
    \end{problem}

For certain small values of $r$, this was confirmed.

\begin{theorem}[Shafique-Dutton \cite{shafique2002satisfactory}, Ban-Linial \cite{ban2016internal}]\label{kisr} Let $r\in \{3,4,6\}$. Then apart from finitely many counterexamples, all $r$-regular graphs have internal partitions. The list of counterexamples is as follows.
\begin{itemize}
    \item for $r=3$, $K_4$ and $K_{3,3}$ do not have an internal partition \cite{shafique2002satisfactory}.
    \item for $r=4$, $K_5$  does not have an internal partition \cite{shafique2002satisfactory}.
    \item for $r=6$, every graph on at least $12$ vertices has an internal partition, thus counterexamples have at most $11$ vertices (and this bound is tight) \cite{ban2016internal}.
\end{itemize}
\end{theorem}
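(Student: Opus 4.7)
The problem for an $r$-regular graph reduces to finding a partition $V(G)=A\sqcup B$ into nonempty parts with $\delta(G[A])\geq \lceil r/2\rceil$ and $\delta(G[B])\geq \lceil r/2\rceil$, since the condition $d_A(v)\geq d_B(v)$ for $v\in A$ (and symmetrically) is equivalent to both induced subgraphs having minimum degree at least $\lceil r/2\rceil$. I would take Stiebitz's theorem as the base tool, invoking it with parameters $a=b=1$ for $r=3$, with $a=2,\,b=1$ for $r=4$, and with $a=b=2$ for $r=6$ (each admissible because $a+b+1\leq r$), and then upgrade the resulting Stiebitz partition to an internal one via local repairs. In every case the Stiebitz bound is off by exactly one on at least one side, so the combinatorial heart of the proof is a flip/augmentation procedure.

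For $r=3$ and $r=4$ I would argue by a minimum counterexample. Let $G$ be an $r$-regular graph, not on the excluded list and of smallest order, admitting no internal partition. The task is to exhibit a \emph{reducible configuration} in $G$: a short substructure (two vertex-disjoint triangles or quadrilaterals, a $2$-edge-cut, a bridge of prescribed shape, a small even subgraph to split off, etc.) that can be contracted or detached to produce a smaller $r$-regular graph $G'$ still satisfying the inductive hypothesis, together with a rule for lifting an internal partition of $G'$ back to one of $G$. One then checks by a finite case analysis that the \textbf{only} $r$-regular graphs avoiding every reducible configuration are $K_4, K_{3,3}$ (for $r=3$) and $K_5$ (for $r=4$). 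This final obstruction-count is the crux and the most technical step, because the reductions must be chosen so that the lifted partition retains the minimum-degree guarantees on each side.

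For $r=6$ the approach is algorithmic. Start from the Stiebitz partition with $a=b=2$, so every vertex has at least two neighbours on each side; call a vertex $v\in A$ \emph{deficient} if $d_A(v)=2$ (equivalently $d_B(v)=4$), and analogously for $B$. If no vertex is deficient, the partition is already internal. Otherwise, choose a deficient vertex and flip it, tracking a potential such as
\[
\Phi(A,B) \;=\; \sum_{v\in A}\max\bigl(0,\,3-d_A(v)\bigr)+\sum_{v\in B}\max\bigl(0,\,3-d_B(v)\bigr),
\]
together with a tiebreaker like $|E(A,B)|$. The main obstacle is that a naive flip can create new deficiencies among neighbours; one must either design a flip rule that provably decreases $\Phi$, or show that any configuration in which no admissible flip decreases $\Phi$ forces $|V(G)|\leq 11$ via a tight local edge count around a would-be flip. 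Pinning down the sharp threshold $n\geq 12$ and enumerating the small exceptional graphs realising it is the principal difficulty; this is where a finer probabilistic/exchange argument in the spirit of Ban--Linial is required, and where a direct Stiebitz-plus-flips approach alone does not suffice.
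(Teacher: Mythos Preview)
The paper does \emph{not} prove this theorem; it is quoted as a known result with citations to Shafique--Dutton and Ban--Linial, and no proof appears anywhere in the text. So there is nothing to compare your proposal against.

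That said, what you have written is not a proof either. It is a programme with explicitly acknowledged gaps. For $r=3,4$ you invoke Stiebitz to get one side off by one, then say you would ``upgrade'' via reducible configurations and a minimum-counterexample argument, but you do not name a single configuration, give a reduction rule, or explain why the lifting preserves the degree condition; you yourself call this ``the crux and the most technical step'' and leave it undone. For $r=6$ you propose a flip procedure with a potential $\Phi$, concede that naive flips can increase $\Phi$, and then end by saying that a direct Stiebitz-plus-flips approach ``does not suffice'' and that Ban--Linial's finer argument is ``required.'' That is an admission that the argument is incomplete, not a proof.

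If you want to actually prove these cases, note that the published proofs go rather differently from your outline. For $r=4$ the structure of $4$-regular graphs as edge-disjoint unions of cycles (Eulerian decomposition) is the natural starting point, not a Stiebitz partition with asymmetric $a,b$. For $r=6$ Ban--Linial's argument is not a local-flip repair of Stiebitz but an extremal argument about cohesive sets and bisection-type cuts; the sharp bound $n\ge 12$ comes from an explicit construction on $11$ vertices together with an edge-counting obstruction, not from stalling a potential function. Your $r=3$ sketch is closest in spirit to what one actually does, but the list of reducible configurations for cubic graphs still has to be produced and verified.
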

In fact,  Shafique and Dutton conjectured that in the $r$ even case only the complete bipartite graph does not admit an internal partition but this was disproved by Ban and Linial \cite{ban2016internal} who constructed $2k$-regular graphs  on $3k+2$ vertices which does not have such partitions.

There are several directions in which partial results have been achieved recently concerning Problem \ref{mainDeVos}.  A natural weakening of the requirement is to show that a typical, i.e. randomly chosen $r$-regular graph admits an internal partition. One may also pose some restrictions  to obtain an affirmative answer for a large  class of graphs. Another variant is to allow a small proportion of the vertices to have fewer neighbors than required. In these directions impressive 
breakthrough results have been achieved lately.

Linial and Louis proved  \cite{linial2020asymptotically} that for every positive
integer $r$, asymptotically almost every $2r$-regular graph has an internal partition.
Very recently, Ferber et al. resolved  \cite{ferber2021friendly} a conjecture of Füredi  by proving that with high probability, the
random graph $G(n, 1/2)$ admits a partition of its vertex
set into two parts whose sizes differ by at most one in which $n-o(n)$ vertices have at least as many neighbours in their own part as across.

\bigskip

We propose a new direction in the spirit of a lemma of Ban and Linial. For short they introduced the term $k$-cohesive for vertex sets spanning a graph of minimum degree at least $k$.

\begin{prop}[Ban, Linial \cite{ban2016internal}]\label{banlinial}
Every $n$-vertex $d$-regular graph has a $\lceil d/2\rceil$-cohesive set of size at most $\lceil n/2\rceil$ for $d$ even  and of size $n/2+1$ for $d$ odd.
\end{prop}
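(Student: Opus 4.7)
The plan is an extremal argument over partitions of prescribed size. Set $k := \lceil n/2\rceil$ if $d$ is even and $k := n/2+1$ if $d$ is odd (in which case $n$ is necessarily even). Among all subsets $A \subseteq V$ with $|A|=k$, I would choose one that minimises the cut $e(A, V\setminus A)$, which is equivalent to maximising the induced edge count $e(G[A])$. The claim is that this extremal $A$ is itself $\lceil d/2\rceil$-cohesive, which is exactly what the proposition requires.

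The verification proceeds by contradiction. Suppose some $v \in A$ has $d_A(v) < \lceil d/2\rceil$, so that $d_B(v) \geq \lfloor d/2\rfloor + 1$ with $B := V\setminus A$. A direct edge count shows that the swap $A \mapsto (A\setminus\{v\})\cup\{u\}$ for any $u \in B$ changes the cut by
\[
\Delta \;=\; 2\bigl(d_A(v) - d_A(u) + \mathbb{1}[uv \in E]\bigr).
\]
Minimality forces $d_A(u) \leq d_A(v) + \mathbb{1}[uv \in E]$ for every $u \in B$. Splitting $B$ into the neighbours and non-neighbours of $v$ and summing yields
\[
e(A,B) \;\leq\; \bigl(\lceil d/2\rceil - 1\bigr)|B| + d_B(v).
\]
I would then confront this with the identity $e(A,B) = |A|d - 2\,e(G[A])$, the prescribed value $|A|=k$, and the surplus $d_B(v)\geq \lfloor d/2\rfloor+1$, so that the only way to reconcile the two sides is to rule out a deficient $v$. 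For $d$ odd the extra slot in $|A|=n/2+1$ provides the precise slack that lets the inequality bite.

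The main obstacle is that the single-swap constraint alone does not pin down $e(G[A])$ from below, since one cannot rule out the existence of several deficient vertices in $A$. I would address this by adding a secondary tiebreaker to the extremal choice: among cut-minimisers, also minimise the number of deficient vertices $|\{w\in A: d_A(w)<\lceil d/2\rceil\}|$. Applying an analogous swap to a hypothetical second deficient vertex then forces every $w \in A\setminus\{v\}$ to satisfy $d_A(w)\geq \lceil d/2\rceil$, which supplies the bound $2\,e(G[A])\geq (|A|-1)\lceil d/2\rceil + d_A(v)$ needed to close the counting. A technically cleaner alternative, which I would fall back on if the tiebreaker route becomes opaque, is a greedy descent: start from $A_0 = V$ (trivially cohesive) and at each step remove a vertex $v_i$ whenever $A_i\setminus\{v_i\}$ remains $\lceil d/2\rceil$-cohesive. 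The crux then becomes showing such a removable $v_i$ always exists when $|A_i|>k$, which reduces to controlling the domination behaviour of the "tight" set $T_i = \{w\in A_i : d_{A_i}(w)=\lceil d/2\rceil\}$ by an edge count between $T_i$ and the small complement $V\setminus A_i$.
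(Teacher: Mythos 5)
Your overall strategy (a minimum cut plus local swaps) is the right family of ideas, but the specific claim you build everything on --- that a cut-minimising set $A$ of size \emph{exactly} $k$ is itself $\lceil d/2\rceil$-cohesive --- is false for odd $d$. Take $G=K_4\cup K_4$, so $d=3$, $n=8$, $k=n/2+1=5$ and $\lceil d/2\rceil=2$. Since $e(A,B)=15-2e(G[A])$ for every $5$-set $A$, the cut is minimised exactly when $A$ is one whole $K_4$ together with a single vertex $x$ of the other copy; then $d_A(x)=0$, so \emph{every} cut-minimiser of size $5$ fails to be $2$-cohesive (the proposition holds there only because $K_4$ itself is a cohesive set of size $4<5$). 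The counting cannot be repaired either: the swap inequality and the bound $2e(G[A])\ge(|A|-1)\lceil d/2\rceil+d_A(v)$ both produce \emph{upper} bounds on $e(A,B)$, and two upper bounds never contradict each other; you would need a lower bound on the cut, i.e.\ an upper bound on $e(G[A])$, which nothing in the argument supplies. The tiebreaker also does not do what you want: a cut-preserving swap of a second deficient vertex $w\in A$ with $u\in B$ forces $d_A(u)=d_A(w)+\mathbb{I}(uw\in E)$, so $u$ enters $A$ with induced degree $d_A(w)<\lceil d/2\rceil$ and is deficient again --- the number of deficient vertices does not drop. Finally, the greedy fallback fails outright: peeling can get stuck at a minimal cohesive set larger than the target (remove two antipodal vertices from the cube $Q_3$ and you are left with an induced $6$-cycle, a minimal $2$-cohesive set of size $6>5=n/2+1$; similarly one gets stuck at $K_3\cup K_3$ inside $K_4\cup K_4$).

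The repair is to give up the insistence that the cohesive set be $A$ itself, of that exact size; note the paper only cites this result from Ban--Linial, and the standard proof is a minimum-cut argument with this extra flexibility. Your own swap inequality already finishes the even case: if $v\in A$ is deficient then every $u\in B$ satisfies $d_A(u)\le d/2-1+\mathbb{I}(uv\in E)\le d/2$, hence $d_B(u)\ge d/2$ and $B$ is cohesive with $|B|=\lfloor n/2\rfloor\le\lceil n/2\rceil$. For odd $d=2k'+1$ with $|A|=n/2+1$, the same inequality gives $d_B(u)\ge k'+1$ for every $u\in B$ nonadjacent to $v$ and $d_B(u)\ge k'$ for every neighbour of $v$ in $B$; consequently $B\cup\{v\}$ is $(k'+1)$-cohesive of size $n/2$. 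Equivalently, one can minimise the cut over partitions whose part sizes are only constrained to lie in a small window around $n/2$ and use single-vertex moves: a deficient vertex in a part of size at least $n/2$ has strictly more neighbours across, so moving it strictly decreases the cut while staying in the window, whence that part is cohesive and has at most $n/2+1$ vertices.
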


Problem \ref{mainDeVos} aims for two disjoint $\lceil d/2\rceil$-cohesive sets $A$ and $B$ in $d$-regular graphs provided that $n$ is large enough. Indeed, let us add the vertices from the complement of $A\cup B$ one by one to $A$, provided that they have at least 
$\lceil d/2\rceil$ neighbours in $A$. After the procedure stops, we add the remaining vertices to $B$ and it is straightforward that the resulting partition is internal. \\
Since there are $d$-regular graphs without two disjoint $\lceil d/2\rceil$-cohesive sets, it is natural goal to obtain a good universal upper bound on the intersection size of well chosen pairs of $\lceil d/2\rceil$-cohesive sets
in $d$-regular graphs. This leads to

\begin{problem} Let $\mathcal{G}_{n,d}$ denote the the set of $d$-regular  $n$-vertex graphs.
Determine 
 $$\Phi(n,d):=\max_{G\in \mathcal{G}_{n,d} }\min\left\{\frac{ |V(H_1)\cap V(H_2)|}{n} \ : \ H_i \subseteq G, \delta(H_i)\geq \lceil d/2\rceil \ \forall i\in \{1,2\} \right\}.$$
\end{problem}

\noindent If the answer for Problem \ref{mainDeVos} is affirmative, then clearly $\Phi(n,d)=0$ for fixed $d$ and $n>n_0(d)$. Note also that for $G=K_{d+1}$ and $G=K_{d,d}$, $d$ odd, the intersection size is at least $\frac{2}{d+1}|V(G)|$ and $\frac{1}{d}|V(G)|$, respectively. On the other hand, $\Phi(n,d)=0$ due to Theorem \ref{kisr} for $d\in\{3,4,6\}$ if $n\geq 12$.

Our main result is
\begin{theorem}\label{main}
$\Phi(n,5)\leq 0.2456+o(1)$.
\end{theorem}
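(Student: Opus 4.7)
The plan is to leverage Proposition \ref{banlinial}: apply it once to obtain a $3$-cohesive set $H_1$ of size at most $n/2+1$, and then try to build a second $3$-cohesive set $H_2$ that lives mostly inside $B:=V(G)\setminus V(H_1)$. Because $H_1$ is $3$-cohesive and $G$ is $5$-regular, each vertex of $H_1$ has at most $2$ neighbours in $B$, so $e(H_1,B)\le 2|V(H_1)|$ and hence $G[B]$ contains at least $\tfrac{5|B|-2|V(H_1)|}{2}$ edges. For $|V(H_1)|\sim n/2$ this gives $G[B]$ an average degree of essentially $3$, which makes it plausible that a $3$-cohesive set filling most of $B$ exists.

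Concretely I would take $H_2=B\cup X$ for a correction set $X\subseteq V(H_1)$, chosen so that $H_2$ is $3$-cohesive and $|X|$ is as small as possible. Since $V(H_1)\cap V(H_2)=X$, the whole task reduces to bounding $|X|/n$. Call a vertex $v\in B$ \emph{deficient} if it has fewer than $3$ neighbours inside $B$; each deficient vertex needs additional neighbours drawn from $X$, while on the supply side each vertex of $H_1$ has at most $2$ neighbours in $B$ and so can remedy at most $2$ deficiencies. A discharging/double-counting argument against $e(H_1,B)$, together with a careful analysis of the $3$-core peeling order within $G[B]$, should translate the structural data of $G$ into an upper bound on $|X|$; the constant $0.2456$ would emerge by optimising over the joint distribution of the degrees $d_B(v)$ for $v\in B$.

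The main obstacle I foresee is the recursive nature of the $3$-cohesion requirement: a vertex $x\in X$ must itself have at least $3$ neighbours in $H_2=B\cup X$, and if $x$ has few neighbours in $B$ one is forced to pull further vertices of $H_1$ into $X$, producing a cascade. Controlling this cascade — for instance by prioritising as members of $X$ those vertices of $H_1$ that have exactly $2$ neighbours in $B$ and good internal support in $H_1$, or by processing $B$ in a cleverly chosen peeling order — is where the analysis becomes delicate. I would expect the proof to split into cases according to the number of vertices of $H_1$ with $0$, $1$ or $2$ neighbours in $B$ and to conclude with a small numerical optimisation, which would explain why the final bound is an apparently arbitrary constant $0.2456$ rather than a clean closed form.
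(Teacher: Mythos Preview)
What you have written is a plan rather than a proof: you identify the cascade obstacle but do not resolve it, and the claim that ``a discharging/double-counting argument \dots should translate the structural data of $G$ into an upper bound on $|X|$'' is aspiration, not argument. In particular nothing in the sketch excludes the scenario in which $G[B]$ is a disjoint union of cycles (so every vertex of $B$ is deficient by one) while every vertex of $H_1$ has at most two neighbours in $B$; then each $x\in X$ repairs at most two deficiencies yet itself needs at least one further neighbour inside $X$, and the cascade you anticipate is launched with no visible termination mechanism. There is no ingredient here that would drive the bound below $1/4$, let alone to the specific constant $0.2456$.

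The paper proceeds along an entirely different line and sidesteps the cascade by working with \emph{edge} deletions rather than vertex augmentations. After obtaining the small $3$-cohesive set $H=H_1$ via Proposition~\ref{banlinial}, the paper proves (Proposition~\ref{minfof(k)os}, via a random-sampling estimate combined with a Kostochka--Melnikov bisection argument) that inside any $3$-cohesive graph with $\Delta\le 5$ one can find, for each $k$, a $k$-vertex subgraph $H'$ with $\Delta(H')\le 3$ and $|E(H')|\ge k+0.1355\,k^2/|V(H)|$. One then extends $E(H')$ to an edge set $E^*$ of size at most $3k-|E(H')|$ in which every vertex of $V(H')$ has degree $3$, hence degree at most $2$ in $G-E^*$. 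Provided $|E^*|\le n/2+O(1)$, the graph $G-E^*$ still has at least $2n-2$ edges, and the Erd\H{o}s--Faudree--Rousseau--Schelp theorem (Theorem~\ref{EFRS}) produces a second $3$-cohesive set that automatically avoids all of $V(H')$. Solving $3k-f(k)=n/2+O(1)$ for the maximal feasible $k$ gives the numerical bound; the constant comes from an explicit polynomial optimisation (in particular the real root of $36x^5-45x^4+8$), not from a case split over degree profiles in $B$.
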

We also show a slightly weaker statement, which on the other hand provides an exact result: in each $n$-vertex  $5$-regular graph, the minimum intersection of $3$-cohesive sets is at most $n/4+1$.

We also prove that there are exactly three Cayley graph of valency $5$ over finite Abelian groups which do not admit an internal partition.


\bigskip

Our paper is organized as follows. In Section 2. we briefly summarize the main definitions and notations that we will use throughout the paper and state some results which will serve as a starting point. Then we discuss how results concerning the bisection width relate to our problem  and point out that random-like or expander-like graphs are those in which internal partitions are hard to find. Indeed, as Bazgan, Tuza and Vanderpooten remark \cite{bazgan2010satisfactory}, one can find an internal partition  via a simple local vertex-switching algorithm if there is a bisection of size at most $n/2$. On the other hand, the bisection width of almost all $d$-regular graphs of order $n$ is at least $n(\frac{d}{4} -\frac{\sqrt{d}\ln 2}{2}) $ according to the bound of Bollobás \cite{bollobas1988isoperimetric} and in fact this lower bound bisection  is not far from the upper bound $n\frac{d}{4} -\Theta(n\sqrt{d}) $ due to Alon \cite{alon1997edge}.\\ 
In Section 3.  we prove the main result in a slightly weaker form first. Then we slightly improve the theorem of  Kostochka and Melnikov on the bisection width of sparse graph, making it applicable to non-regular graphs as well.  This enables us to prove the main result of the paper Theorem \ref{main}.
We also discuss a different approach which relies on finding dense enough subgraphs width maximum degree constraints, which may be of independent interest. \\
Motivated by the expander-like property of graphs not having internal partitions, we study some families of Cayley graphs  in Section 4. and characterize those graphs in these families that do not admit such partitions, including the $5$-regular Cayley graphs over finite Abelian groups.  Finally we discuss further open problems in the area in the last section.

\section{Preliminaries and connections to bisection width}

We begin this section by setting the main notations and definitions. Then we discuss the connection between the existence of internal partitions and the minimum size of bisection.

A bisection and a near-bisection of a graph with $n$ vertices is a partition of its vertices into two sets whose  sizes  are the same, and whose sizes differ by at most one, respectively. The bisection size is the number of edges connecting the two sets. Note that finding the bisection of minimum size, in other words, the {\em bisection width}  is NP-hard and only very weak approximations are known in general (see e.g. \cite{feige2000approximating}).

Consider a regular graph $G$ on vertex set $V$. A set $U \subseteq V$ is $k$-cohesive if $G$ restricted to $U$ has minimum degree at least $k$. $G|_U$ denotes the the graph induced by the subset $U$. $N(v)$ denotes the set of neighbors of vertex $v$ while $d(v)$ denotes the degree of vertex $v$, i.e., $d(v)=|N(v)|$. If we consider the degrees w.r.t a certain induced subgraph or another graph the respective graph is indicated in the index.  $N[v]$ denotes the closed neighborhood $N[v]:=N(v)\cup \{v\}$. 

\begin{nota}
We use the notion $n_{(k)}=\binom{n}{k}\cdot k!$ for the falling factorial.
\end{nota}

Let us introduce  two lemmas from the paper of Ban and Linial \cite{ban2016internal}.

\begin{claim}[\cite{ban2016internal}]
\label{(n-3)-regular}
An $(n-3)$-regular graph $G$ has an internal partition if and only if its complementary graph $G$ has at most one odd cycle. Furthermore this partition is a near-bisection.
\end{claim}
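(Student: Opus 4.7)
The plan is to pass to the complement $\overline{G}$, which is $2$-regular and hence a disjoint union of cycles $C_1,\dots,C_t$; every vertex has exactly two non-$G$-neighbours, so the internal-partition inequalities become convenient statements about these few edges. For a candidate partition $V=A\cup B$, write $\bar d_X(v)$ for the number of $\overline{G}$-neighbours of $v$ in $X$. Since $d_A(v)=|A|-1-\bar d_A(v)$ and $d_B(v)=|B|-\bar d_B(v)$ for $v\in A$, and $\bar d_A(v)+\bar d_B(v)=2$ throughout, the internal condition $d_A(v)\ge d_B(v)$ rewrites as
\[
\bar d_B(v)-\bar d_A(v)\;\ge\;|B|-|A|+1\qquad(v\in A),
\]
together with the symmetric inequality for $v\in B$.

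My first step would be to extract the near-bisection property: since $\bar d_B(v)-\bar d_A(v)\in\{-2,0,2\}$, both inequalities can hold simultaneously only when $\big||A|-|B|\big|\le 1$, which already settles the ``furthermore'' clause. I would then split into the two possible balances.

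In the balanced case $|A|=|B|$ (forcing $n$ even), the inequality sharpens to $\bar d_B(v)=2$ for all $v\in A$ and symmetrically $\bar d_A(v)=2$ for all $v\in B$; thus every edge of $\overline{G}$ crosses $(A,B)$, so $\overline{G}$ is bipartite and has no odd cycle. Conversely, any bipartition of a bipartite $\overline{G}$ supplies an internal partition. In the unbalanced case, say $|B|=|A|+1$ (forcing $n$ odd), $A$-vertices must still have both $\overline{G}$-neighbours in $B$, while each $v\in B$ only needs $\bar d_A(v)\ge 1$. I would then analyse one cycle $C_i$ of $\overline{G}$ at a time: $A$-vertices occur non-consecutively on $C_i$ (two adjacent $A$-vertices would violate $\bar d_B=2$), and between two consecutive $A$-vertices at most two $B$-vertices can appear (an interior $B$-vertex of a longer run would have $\bar d_A=0$). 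Writing $p_i,q_i$ for the number of $A$- and $B$-vertices on $C_i$, this yields $p_i\le q_i\le 2p_i$, and $|C_i|=p_i+q_i$ is odd iff $q_i>p_i$. Summing $\sum_i(q_i-p_i)=|B|-|A|=1$ then forces exactly one index with $q_i-p_i=1$, i.e.\ exactly one odd cycle in $\overline{G}$. The converse is immediate by running the same local recipe backwards: alternate $A$ and $B$ along each even cycle, and on the unique odd cycle use the pattern $ABAB\cdots AB\!B$ containing a single length-two $B$-run.

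The step I expect to be the main obstacle is the cycle-wise structural argument in the unbalanced case — specifically verifying that runs of $B$-vertices between consecutive $A$-vertices have length at most two, and then doing the parity bookkeeping to pin down that exactly one cycle of $\overline{G}$ is odd. Once this local-to-global reduction is in hand, both directions of the equivalence, together with the near-bisection conclusion, fall out simultaneously.
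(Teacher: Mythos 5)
This claim is stated in the paper without proof (it is imported from Ban and Linial, whose own argument is exactly the complementation analysis you carry out), and your route is the standard one: the complement is $2$-regular, hence a disjoint union of cycles; the internal-partition inequalities combined with $\bar d_B(v)-\bar d_A(v)\in\{-2,0,2\}$ force a near-bisection; and the run-length analysis of $B$-vertices between consecutive $A$-vertices on each cycle pins down the number of odd cycles in both directions. Your proof is correct; the only blemish is the throwaway assertion that $|C_i|=p_i+q_i$ is odd iff $q_i>p_i$, which is false in general (e.g.\ $p_i=2$, $q_i=4$), but since $\sum_i(q_i-p_i)=|B|-|A|=1$ with each summand nonnegative forces every $q_i-p_i$ into $\{0,1\}$, the conclusion you actually draw from it — exactly one cycle with $q_i-p_i=1$, and that cycle alone is odd — is sound.
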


\begin{claim}[\cite{ban2016internal}]
For even $n$, every $(n-2)$-regular graph has an internal bisection.
\end{claim}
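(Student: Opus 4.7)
The plan is to exploit the complement. Since $G$ is $(n-2)$-regular on $n$ vertices, the complement $\bar{G}$ is $1$-regular, and because $n$ is even this forces $\bar{G}$ to be a perfect matching $M$ on $n/2$ edges. Equivalently, every vertex $v$ has a unique non-neighbor $v'$ in $G$, and the map $v \mapsto v'$ is a fixed-point-free involution on $V(G)$.

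With this structural observation in hand, I would construct the bisection directly: for every matching edge $\{u,u'\} \in M$, place one endpoint in $A$ and the other in $B$, with either choice of orientation. This yields $|A| = |B| = n/2$, since each of the $n/2$ matching edges contributes exactly one vertex to each side, so the partition is automatically a bisection.

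To verify internality, fix an arbitrary $v \in A$. By construction $v' \in B$, so in $G$ the vertex $v$ is adjacent to every vertex of $A \setminus \{v\}$ and to every vertex of $B \setminus \{v'\}$. Hence $d_A(v) = n/2 - 1 = d_B(v)$, and in particular $d_A(v) \geq d_B(v)$. The symmetric calculation handles the case $v \in B$, so the partition is an internal bisection.

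There is really no substantive obstacle here: the content is entirely carried by the observation that non-adjacency in $G$ is encoded by a perfect matching, which makes the defining balance condition automatic once this matching is split across the partition. Note also that the parity hypothesis on $n$ is essentially forced by the handshake lemma, since no $(n-2)$-regular graph exists when $n$ is odd, and one obtains equality $d_A(v) = d_B(v)$ for every vertex, so in fact the constructed partition satisfies the internal condition as tightly as possible.
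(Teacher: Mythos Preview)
Your proof is correct. The paper does not supply its own proof of this claim---it is quoted from Ban and Linial \cite{ban2016internal}---but your argument via the complement perfect matching is exactly the natural (and standard) one, and is presumably what the cited reference does as well.
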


As it was mentioned in \cite{bazgan2010satisfactory},  relatively small cuts gives evidence to the existence of internal partitions. We present the proof for the case of $5$-regular graphs as this class is the focus of our work and opt to extend it to the case of arbitrary regular graphs as next step.

\begin{prop}
\label{n/2}
If there exists a bisection of a $5$-regular simple graph $G$  of size at most $n/2+5$, then there exists an internal partition for $G$.
\end{prop}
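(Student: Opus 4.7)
The plan is to apply the standard local switching algorithm to the given bisection $(A_0,B_0)$: as long as some vertex $v$ has $d_{V\setminus C}(v)\geq 3$, where $C\in\{A,B\}$ is its current class, move $v$ to the opposite side. Such a switch changes the cut by $5-2\,d_{V\setminus C}(v)\in\{-1,-3,-5\}$, so the cut is a strictly decreasing, nonnegative integer potential and the procedure terminates. At termination no vertex is ``bad'', so provided both $A$ and $B$ are still nonempty the resulting partition is internal. The whole task therefore reduces to showing that neither class empties during the process.

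To this end I would use the elementary isoperimetric estimate that for any $U\subseteq V$ with $|U|\leq 5$,
\[
e(U,V\setminus U)=5|U|-2\,e(G[U])\geq |U|(6-|U|).
\]
Assume for contradiction that the trajectory $|B_t|$ hits $3$ at some step $t^{*}$. Then $C_{t^{*}}\geq 9$, while going from $|B_0|=n/2$ down to $|B_{t^{*}}|=3$ takes at least $n/2-3$ switches, each of which drops the cut by at least $1$. Hence $C_0\geq C_{t^{*}}+(n/2-3)\geq n/2+6$, contradicting $C_0\leq n/2+5$. By symmetry the analogous argument forbids $|A|$ from reaching $3$, so throughout the algorithm $|A|,|B|\geq 4$ and the terminal partition is a bona fide internal partition.

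The main obstacle---and the place where the specific constant $+5$ in the hypothesis enters---is choosing the correct ``pinch value''. Because $|B|$ changes by exactly $\pm 1$ at each switch, a side heading for emptiness must pass through size $3$; at that size the isoperimetric bound gives cut at least $9$, which combined with the $n/2-3$ forced unit drops required to reach it already forces $C_0\geq n/2+6$, exactly one more than the hypothesis permits. A minor boundary issue is the case $n=6$: the only $5$-regular graph there is $K_6$, whose minimum bisection has size $9>n/2+5=8$, so the hypothesis is vacuous and the algorithmic argument is only required for $n\geq 8$.
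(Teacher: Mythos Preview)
Your proof is correct and follows essentially the same local-switching argument as the paper: move bad vertices, observe that the cut strictly decreases, and derive a contradiction from the isoperimetric lower bound on the cut when one side becomes small. The only cosmetic difference is that the paper stops the analysis at a class of size~$2$ (giving cut~$\geq 8$ after at least $n/2-2$ moves) whereas you stop at size~$3$ (cut~$\geq 9$ after at least $n/2-3$ moves); both yield the identical threshold $C_0\geq n/2+6$.
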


\begin{proof}
Let us call a vertex {\em bad} if it has less neighbors in its own partition class than in the other one. Let us successively move bad vertices from their class to the other class. The number of edges is decreasing between the partition classes in each move. If no bad vertices remain after some moves, we ended up at an internal partition or one of the partition classes became empty. However, the latter case cannot happen. Suppose that on the contrary, one of the classes could became empty at the end of the procedure. Then after at least $(n/2-2)$ moves, we reached a phase where one of the partition classes has size $2$. The number of edges between the two classes is at most $n/2+5-(n/2-2)=7$ at this point, but this contradicts to the valency of~$G$.
\end{proof}
Proposition \ref{n/2} is sharp as the bound $n/2+5$ cannot be improved  according to the result below.

\begin{prop}\label{5reg}
For every even $n\geq 8$, there exists a $5$-regular graph admitting a bisection of size $n/2+6$ in which the algorithm that successively put bad vertices to the other partition class ends with a trivial partition (consisting of the whole vertex set and an empty set).
\end{prop}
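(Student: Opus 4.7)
My plan is to exhibit an explicit $5$-regular graph. Set $m := n/2 \geq 4$ and let $V(G) = A \cup B$ with $A = \{a_1, \dots, a_m\}$ and $B = \{b_1, \dots, b_m\}$. I take the induced subgraphs $G|_A$ and $G|_B$ to be the square of the path on $m$ vertices, i.e.\ $a_i a_j \in E(G)$ whenever $1 \leq |i-j| \leq 2$, and analogously for the $b$'s. The $A$-internal degree sequence is then $(2,3,4,\dots,4,3,2)$, so for $G$ to be $5$-regular each $a_i$ must have $d_B(a_i) = 5 - d_A(a_i)$, yielding the $A$-side cross-degree sequence $(3,2,1,\dots,1,2,3)$, which sums to $m+6 = n/2+6$. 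A simple bipartite realization of the cross edges that works for every $m \geq 4$ is: $a_1 \sim b_1, b_2, b_m$; $a_m \sim b_1, b_{m-1}, b_m$; $a_2 \sim b_{m-1}, b_m$; $a_{m-1} \sim b_1, b_2$; and $a_i \sim b_i$ for every $3 \leq i \leq m-2$. A short check on the four pairwise distinct indices $1, 2, m-1, m$ verifies that $G$ is simple, the degree conditions hold on both sides, and the bisection $(A,B)$ has cut-size exactly $n/2+6$.

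Next I argue that the greedy procedure admits a valid execution that moves $a_1, a_2, \ldots, a_m$ (in that order) from $A$ to $B$. The key observation is that the value $d_A(a_i)$ at the moment $a_i$ is considered depends only on $G|_A$ and on the ordering of the moves -- cross edges to $B$ contribute nothing to $d_A$, and $G|_B$ is immaterial. Indeed, after $a_1, \ldots, a_{i-1}$ have already been moved to $B$, the current $A$-class equals $\{a_i, a_{i+1}, \dots, a_m\}$, so
\[
d_A(a_i) = |\{a_{i+1}, a_{i+2}\} \cap \{a_{i+1}, \dots, a_m\}| \leq 2,
\]
and hence $d_B(a_i) = 5 - d_A(a_i) \geq 3 > d_A(a_i)$, confirming that $a_i$ is bad at step $i$. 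After $m$ such moves the class $A$ is empty and the algorithm terminates at the trivial partition.

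The main (mild) obstacle is that other bad vertices appear throughout the procedure -- for instance $b_1$ and $b_m$ are already bad in the initial bisection -- so one has to verify that always selecting an $A$-vertex to move is legitimate. This is harmless because the greedy procedure is non-deterministic: its rule only requires that at each step some bad vertex be moved, leaving the choice of which one free. Since $n/2+6$ is exactly one more than the threshold of Proposition~\ref{n/2}, this construction exhibits the announced sharpness.
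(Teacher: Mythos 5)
Your proof is correct and takes essentially the same approach as the paper: both classes induce the square of a path, the $n/2+6$ cross edges are arranged so that the path ends absorb the surplus, and the greedy execution that moves $a_1, a_2, \dots, a_m$ in order empties one class because at step $i$ only $a_{i+1},a_{i+2}$ remain as in-class neighbours of $a_i$. The only (immaterial) difference is that the paper keeps the full perfect matching $u_iw_i$ and adds six extra edges at the ends, whereas you redistribute the cross-degrees $(3,2,1,\dots,1,2,3)$ directly.
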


\begin{proof} Let $V(G):=\{u_1\ldots u_n\}\cup \{w_1, \ldots w_n\}$, while $E(G):=\{u_iu_{i+1}, w_iw_{i+1} : i\in 1\ldots n-1\}$ 
$\cup  \{u_iw_{i} : i\in 1\ldots n\}\cup \{u_iu_{i+2}, w_iw_{i+2} : i\in 1\ldots n-2\}\cup$\\
$\{u_1w_2, u_1w_n, u_2w_1, u_{n-1}w_n, u_nw_1, u_nw_{n-1}\}$, see Figure \ref{UW}.  The bisection is $U\cup W$.
After  moving  $u_1$ to $W$ as a first step, a chain of moves begins with moving $u_i$ to $W$ in the $i$th step.
\end{proof}

\begin{figure}[!ht]
\centering
  \includegraphics[width=.6\linewidth]{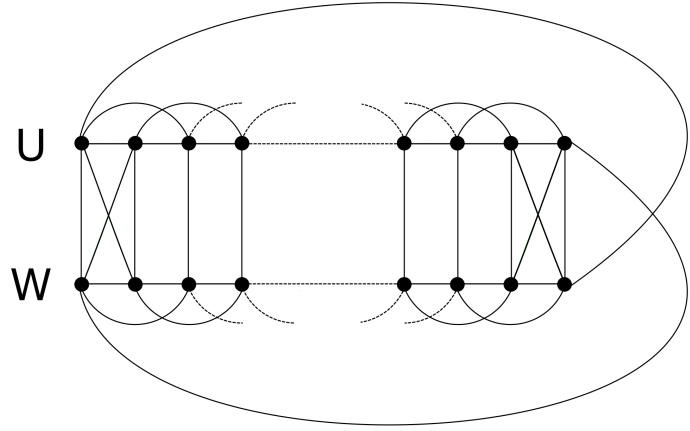}
  \caption{A $5$-regular graph with a relatively small bisection $(U,W)$ where the local switching algorithm fails.}
  \label{UW}
\end{figure}

\begin{theorem}
\label{cor}
\hfill
   \begin{enumerate}[label={\thecor.\arabic*}]

\item If there exists a bisection in a $(2k+1)$-regular  graph $G$  of size at most $n/2+k(k+1)-1$, then there exists an internal partition for $G$.\label{n/2+k(k+1)-1}

\item If there exists a bisection in a $2k$-regular  graph $G$  of size at most $n+k(k-1)-1$, then there exists an internal partition for $G$.\label{n+k(k-1)-1}
\end{enumerate}
\end{theorem}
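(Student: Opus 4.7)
The plan is to run exactly the local switching procedure used in the proof of Proposition \ref{n/2}: start from the given bisection, and while some vertex $v$ has strictly fewer neighbors in its own class than in the other, move $v$ to the opposite class. Each such move strictly decreases the number of cut edges, so the procedure terminates; we need only rule out the possibility that it ends with one class empty.

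The first step is to quantify the per-move drop of the cut in a parity-sensitive way. For a bad vertex $v$ in a $(2k+1)$-regular graph we have $d_A(v)\leq k$ and $d_B(v)\geq k+1$, so moving $v$ decreases the cut by $d_B(v)-d_A(v)\geq 1$. For a $2k$-regular graph, the difference $d_B(v)-d_A(v)$ has the same parity as $2k$, hence is even and at least $2$. This parity doubling in the even case is precisely what accounts for the different-looking bounds in the two parts of the statement.

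Assume now, towards a contradiction, that the procedure ends with one class, say $A$, empty. Because $|A|$ changes by $\pm 1$ at each step and drops from $n/2$ to $0$, it must first reach the value $|A|=k$ at some moment after at least $n/2-k$ steps. Combining with the per-step drop computed above, the cut at that moment is bounded above by
$$B-(n/2-k) \quad \text{in the odd case, and} \quad B-2(n/2-k) \quad \text{in the even case.}$$
Substituting the hypotheses on $B$ yields upper bounds of $k^2+2k-1$ and $k^2+k-1$, respectively.

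The final step is to match these with a lower bound on the cut when $|A|=k$. Since $G$ is $d$-regular and simple, the identity $2e(A)+e(A,B)=kd$ together with $e(A)\leq\binom{k}{2}$ gives $e(A,B)\geq kd-k(k-1)$, which evaluates to $k^2+2k$ for $d=2k+1$ and $k^2+k$ for $d=2k$. In both cases the lower bound exceeds the upper bound by exactly one, the desired contradiction. The only genuinely tactical choice is the level $s=k$ at which we examine the cut; I expect the main obstacle to be keeping the parity bookkeeping straight and verifying that this choice is optimal, which amounts to checking that the concave quadratic $s\mapsto s(d-s+1)$ minus the allowed cumulative drop is tight precisely at $s=k$.
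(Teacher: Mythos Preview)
Your argument is correct and follows essentially the same route as the paper's proof: run the local switching from the given bisection, track the cumulative drop in the cut (at least $1$ per move in the odd case, at least $2$ in the even case by the parity observation), stop the analysis the first time the shrinking side reaches size $k$, and compare with the lower bound $kd-2\binom{k}{2}$ coming from the identity $2e(A)+e(A,B)=kd$. Your explicit parity justification for the even case and the formula $e(A,B)\geq kd-k(k-1)$ are slightly more detailed than the paper's presentation, but the strategy and the pivotal choice $|A|=k$ are identical.
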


\begin{remark}\label{sharp}
For every integer $k>0$ and even $n\geq 4k$, there exists a $2k+1$-regular graph admitting a bisection of size $n/2+k(k+1)$ and a  $2k$-regular graph admitting a bisection of size $n+k(k-1)$ in which the algorithm that successively put bad vertices to the other partition class ends with a trivial partition (consisting of the whole vertex set and an empty set).
\end{remark}

\begin{proof}[Proof of Theorem \ref{n/2+k(k+1)-1}] 
We follow the proof of case $k=2$. After at least $(n/2-k)$ moves, we reached a phase where one of the partition classes has size exactly $k$. The number of edges between the two classes is at most $n/2+k(k+1)-1-(n/2-k)=k(k+2)-1$ at this point. This contradicts to the valency of $G$ since at most $\binom{k}{2}$ edges are induced by $k$ points, thus there should be at least $k(2k+1)-2\binom{k}{2}$ edges going between the two sets. 
\end{proof}
\begin{proof}[Proof of Theorem \ref{n+k(k-1)-1}]
We follow the spirit of the proof of case $k=2$. After at least $(n/2-k)$ moves, we reached a phase where one of the partition classes has size $k$. Since each move decreases the number of edges between the partition classes by two, the number of edges between the two classes is at most $n+k(k-1)-1-2(n/2-k)=k(k+1)-1$ at this point, but this contradicts to the valency of $G$. Indeed,  at most $\binom{k}{2}$ edges are induced by $k$ points, thus there should be at least $k\cdot2k-2\binom{k}{2}$ edges going between the two sets. 
\end{proof}

\begin{proof}[Proof of Remark  \ref{sharp}, odd valency] Let $V(G):=\{u_1\ldots u_n\}\cup \{w_1, \ldots w_n\}$, and let $\{u_{i}u_{j} : 1\leq i, j \leq n, 0 < \vert i-j \vert \leq~k \}\cup \{w_{i}w_{j} : 1\leq i, j \leq n, 0 < \vert i-j \vert \leq k \}\cup \{u_1w_n, u_nw_1\}$ be part of the set of edges. In order to obtain a regular graph of valency $2k+1$ we complete the edge set which is possible due to the Gale–Ryser theorem (see \cite{ryser1963combinatorial}, chapter 6) on solving the bipartite realization problem.
Consider the  bisection  $U\cup W$.
After  moving  $u_1$ to $W$ as a first step, a chain of moves begins with moving $u_i$ to $W$ in the $i$th step.
\end{proof}
\begin{proof}[Proof of Remark  \ref{sharp}, even valency] Let $V(G):=\{u_1\ldots u_n\}\cup \{w_1, \ldots w_n\}$, and let $\{u_{i}u_{j} : 1\leq i, j \leq n, 0 < \vert i-j \vert \leq k-1 \}\cup \{w_{i}w_{j} : 1\leq i, j \leq n, 0 < \vert i-j \vert \leq k-1 \}\cup \{u_1w_n, u_nw_1\}$ be part of the set of edges. In order to obtain a regular graph of valency $2k$ we complete the edge set which is possible due to the Gale–Ryser theorem (see \cite{ryser1963combinatorial}, chapter 6) on solving the bipartite realization problem.
Consider the  bisection  $U\cup W$.
After  moving  $u_1$ to $W$ as a first step, a chain of moves begins with moving $u_i$ to $W$ in the $i$th step.
\end{proof}


A result of Diaz, Serma and Wormald \cite{diaz2007bounds} prove that in fact, random $5$-regular graphs indeed have small bisection width. Almost the same bound was obtain by Lyons \cite{lyons2017factors} via a different method, namely using local algorithms.

\begin{theorem}[Diaz, Serma and Wormald \cite{diaz2007bounds}]\label{DSW}
The bisection width of random $5$-regular graphs is asymptotically almost surely below $0.5028n$.
\end{theorem}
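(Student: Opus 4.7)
The plan is to work with the configuration model and run a second-moment argument on the random variable that counts balanced bisections of small size.

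First I would replace the uniform $5$-regular simple graph (with $n$ even) by the configuration model, placing $5$ half-edges at each vertex and taking a uniform perfect matching on the $5n$ half-edges. Since the resulting multigraph is simple with probability bounded away from zero, any a.a.s.\ statement transfers to the uniform simple model. Fix a balanced partition $A\sqcup B$ with $|A|=|B|=n/2$ and let $e(A,B)$ denote the number of crossing edges. The distribution of $e(A,B)$ is a hypergeometric-type count of half-edge pairings across the cut, with mean $\approx 5n/4$, so Stirling's formula yields a large deviations estimate of the form
$$P\bigl(e(A,B)\le cn\bigr)=\exp\bigl(-n\,I(c)+o(n)\bigr)$$
for an explicit convex rate function $I(c)$ minimized at $c=5/4$.

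Let $X$ denote the number of balanced bisections of size at most $cn$. Then
$$\mathbb{E}[X]=\binom{n}{n/2}\,P\bigl(e(A,B)\le cn\bigr)=\exp\bigl(n(\ln 2-I(c))+o(n)\bigr),$$
so $\mathbb{E}[X]\to\infty$ exactly when $\ln 2>I(c)$. Solving this condition numerically identifies the smallest admissible $c$ as $\approx 0.5028$. To upgrade first-moment growth to existence a.a.s., I would compute $\mathbb{E}[X^2]$ by summing over ordered pairs of balanced bisections parametrized by the overlap $\alpha=|A_1\cap A_2|/(n/2)\in[0,1]$; each overlap class contributes an exponential rate in $n$ coming from Stirling applied to a multivariate hypergeometric joint distribution of the crossing edges.

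The main obstacle is precisely this second-moment optimization: one must show that the rate as a function of $\alpha$ is maximized at the uncorrelated point $\alpha=1/2$, so that $\mathbb{E}[X^2]=(1+o(1))\mathbb{E}[X]^2$ and Paley--Zygmund gives $P(X\ge 1)$ bounded away from $0$. A small-subgraph conditioning / contiguity argument between the configuration model and the uniform simple model then boosts this to $1-o(1)$. Any slack in ruling out that some correlated overlap $\alpha\neq 1/2$ dominates the rate will degrade the numerical constant $0.5028$, so the delicate step is the explicit verification that the joint rate function has its unique maximum at the independent configuration.
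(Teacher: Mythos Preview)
This theorem is quoted from D\'iaz, Serna and Wormald and is not proved in the present paper, so there is no in-paper argument to compare against. For reference, the method in \cite{diaz2007bounds} is entirely different from what you sketch: they analyse a specific randomized greedy procedure for building a balanced cut and track its trajectory via Wormald's differential-equation method; the constant $0.5028$ is the numerical output of the resulting ODE system, not a moment threshold.

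Your outline also contains a concrete error. The first-moment threshold you describe --- the smallest $c$ for which $\ln 2 > I(c)$, equivalently $\mathbb{E}[X]\to\infty$ --- works out in the configuration model to roughly $c\approx 0.365$ (from $H(2c/5)=\tfrac{3}{5}\ln 2$), not $0.5028$. So if the second-moment step went through you would obtain a bound far \emph{stronger} than the one stated; there is no way for your scheme to produce the specific constant $0.5028$. More importantly, the difficulty you flag at the end is not a removable technicality but the genuine obstruction: for minimum bisection, as for max-cut and large independent sets in random regular graphs, near-optimal solutions cluster, the pair-overlap rate is \emph{not} maximised at the uncorrelated point $\alpha=1/2$, and $\mathbb{E}[X^2]/(\mathbb{E}[X])^2$ grows exponentially. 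The vanilla second-moment argument therefore fails well before the first-moment value, which is exactly why algorithmic upper bounds such as the one in \cite{diaz2007bounds}, sitting strictly above that value, remain the state of the art.
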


As a consequence, 
we note that a tiny improvement on the result of Diaz, Serma and Wormald would imply the existence of internal partitions for almost all $5$-regular graphs, in view of Proposition \ref{5reg}.


\section{Finding cohesive sets with small intersection}

Erdős, Faudree, Rousseau and Schelp  proved the following.

\begin{theorem}[Erdős, Faudree, Rousseau and Schelp  \cite{ErdosFaudree}]\label{EFRS}
Every graph $G$  on  $n\geq k-1$ vertices with at least $(k-1)n-\binom{k}{2}+1$  edges contains a subgraph with minimum degree at least $k$.
\end{theorem}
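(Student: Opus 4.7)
The plan is to argue by contraposition: I will show that if $G$ admits no subgraph of minimum degree at least $k$, then $e(G) \leq (k-1)n - \binom{k}{2}$, contradicting the hypothesis that $e(G) \geq (k-1)n - \binom{k}{2}+1$.

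The main tool is a greedy peeling procedure. Set $G_n := G$ and, for $i = 1, 2, \ldots, n$, let $G_{n-i+1}$ denote the current graph, which has $n-i+1$ vertices. Since $G_{n-i+1}$ is itself a subgraph of $G$, it cannot have minimum degree $\geq k$, so there exists a vertex $v_i$ in $G_{n-i+1}$ with $d_{G_{n-i+1}}(v_i) \leq k-1$. Delete $v_i$ to obtain $G_{n-i}$ and iterate until the graph is empty. Each edge of $G$ is deleted at exactly one step (when the first of its two endpoints leaves), so
\[
e(G) \;=\; \sum_{i=1}^n d_{G_{n-i+1}}(v_i).
\]

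Next, I bound each summand sharply. In a graph on $j$ vertices no vertex can have degree larger than $j-1$, so $d_{G_{n-i+1}}(v_i) \leq \min(k-1,\, n-i)$. The first $n-k+1$ steps (where at least $k$ vertices remain) each contribute at most $k-1$, for a total of $(k-1)(n-k+1)$. The final $k-1$ steps (where the vertex count runs through $k-1, k-2, \ldots, 1$) contribute at most $(k-2)+(k-3)+\cdots+0 = \binom{k-1}{2}$. Using the identity $(k-1)^2 - \binom{k-1}{2} = \binom{k}{2}$ one then obtains
\[
e(G) \;\leq\; (k-1)(n-k+1) + \binom{k-1}{2} \;=\; (k-1)n - \binom{k}{2},
\]
yielding the desired contradiction.

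The only real obstacle is the tail bookkeeping: a careless application of the bound ``each removed vertex had degree at most $k-1$'' yields only $e(G) \leq (k-1)n$, which is too weak by $\binom{k}{2}$. The refinement comes from observing that once strictly fewer than $k$ vertices remain, the trivial cap $j-1$ is already smaller than $k-1$, and telescoping these diminishing caps accounts for exactly the $\binom{k}{2}$ correction. The assumption $n \geq k-1$ in the hypothesis is precisely what makes this tail argument meaningful, and in fact the bound is tight, witnessed by any $(k-1)$-degenerate graph built by successively attaching vertices of back-degree $k-1$ to a $K_{k-1}$.
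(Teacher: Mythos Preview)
Your proof is correct; this is exactly the standard degeneracy (greedy peeling) argument for the Erd\H{o}s--Faudree--Rousseau--Schelp bound, including the sharp tail bookkeeping that recovers the $\binom{k}{2}$ term. Note, however, that the paper does not actually prove Theorem~\ref{EFRS}: it is quoted as a known result with a citation to \cite{ErdosFaudree} and then used as a black box in the proofs of Theorems~\ref{n/4} and~\ref{main}, so there is no ``paper's own proof'' to compare against.
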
  

\begin{cor}
Specializing to $k=3$, this yields that $n$-vertex graphs on $2n-2$ edges have $3$-cohesive sets.
\end{cor}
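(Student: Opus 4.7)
The plan is to apply Theorem \ref{EFRS} directly with $k=3$ and then translate the conclusion into the language of cohesive sets. First I would substitute $k=3$ into the threshold from the theorem: $(k-1)n-\binom{k}{2}+1 = 2n-3+1 = 2n-2$. So any graph on $n$ vertices (with $n\geq 2$) possessing at least $2n-2$ edges satisfies the hypothesis of Theorem \ref{EFRS} for $k=3$, and therefore contains a subgraph $H$ with minimum degree $\delta(H)\geq 3$.

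The second step is purely a definitional unpacking. Recall that a set $U\subseteq V(G)$ is $3$-cohesive if the induced subgraph $G|_U$ has minimum degree at least $3$. Taking $U:=V(H)$, each vertex $v\in U$ has at least $d_H(v)\geq 3$ neighbours in $U$ inside $H$, and these neighbours are all present in $G|_U$ as well, possibly accompanied by additional edges not contained in $H$. Consequently $d_{G|_U}(v)\geq d_H(v)\geq 3$ for every $v\in U$, so $U$ is $3$-cohesive. This establishes the corollary.

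There is essentially no obstacle here: the argument is a one-line specialization of Theorem \ref{EFRS}. The only small subtlety worth spelling out is the distinction between an arbitrary subgraph (as produced by the theorem) and the induced subgraph used in the definition of cohesiveness; as noted above, passing from $H$ to $G|_{V(H)}$ can only increase degrees, so the property is preserved.
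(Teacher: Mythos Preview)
Your proof is correct and follows exactly the intended approach: the corollary in the paper is stated as an immediate specialization of Theorem~\ref{EFRS} with no separate proof given, and your substitution $k=3$ together with the observation that passing to the induced subgraph only increases degrees is precisely what is meant. The extra remark about the subgraph-versus-induced-subgraph distinction is a nice clarification but not something the paper spells out.
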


This result has been strengthened in the following two directions.

\begin{theorem}[Alon, Friedland and Kalai \cite{alon1984regular}]
\label{AFK}
Let $p$ be a prime power and  $G$ be a graph having average degree  $\Bar{d} > 2p-2$  and maximum degree $\Delta(G) \leq 2p-1$. The  $G$ has a  $p$-regular subgraph.\end{theorem}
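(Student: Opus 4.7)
The plan is to reduce the statement to a zero-sum problem in the abelian group $\mathbb{Z}_p^V$. First I would extract the key consequence of the hypotheses: since $\bar{d}>2p-2$ we have $|E(G)| \geq n(p-1)+1$, and since $\Delta(G) \leq 2p-1 < 2p$, any edge subset $S \subseteq E(G)$ satisfying $d_S(v) := |\{e \in S : v \in e\}| \equiv 0 \pmod{p}$ for every $v$ automatically has $d_S(v) \in \{0,p\}$. Thus, if such a nonempty $S$ exists, then $U := \{v : d_S(v) = p\}$ together with $S$ forms a $p$-regular subgraph, because any edge $e \in S$ with an endpoint $v \notin U$ would force $d_S(v)=0$, contradicting $e \in S$.

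Constructing such an $S$ is then a zero-sum question. To each edge $e = \{u,w\}$ associate its indicator vector $\chi_e \in \mathbb{Z}_p^V$, with $1$'s in coordinates $u$ and $w$ and zeros elsewhere. Olson's theorem on Davenport constants gives $D(\mathbb{Z}_p^V) = n(p-1)+1$, meaning any sequence of $n(p-1)+1$ elements of $\mathbb{Z}_p^V$ contains a nonempty zero-sum subsequence. Applied to $(\chi_e)_{e \in E(G)}$, of length at least $n(p-1)+1$, this produces a nonempty $S \subseteq E(G)$ with $\sum_{e \in S} \chi_e = 0$ in $\mathbb{Z}_p^V$, i.e.\ $d_S(v) \equiv 0 \pmod p$ for all $v$, completing the argument via the degree-counting step above.

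The main obstacle is proving the required bound on the Davenport constant. In the prime case $p=q$, $\mathbb{Z}_p^V$ is elementary abelian and the bound follows cleanly from the Chevalley--Warning theorem applied to the polynomials $f_v(x) = \sum_{e \ni v} x_e^{p-1}$ of degree $p-1$ over $\F_p$: since $\sum_v \deg f_v = n(p-1) < |E(G)|$, the number of common zeros is divisible by $p$, and the trivial zero $x=0$ forces a further common zero $x^\ast \neq 0$, yielding $S = \{e : x^\ast_e \neq 0\}$ (noting that $y^{p-1}$ is the indicator of $y \neq 0$ over $\F_p$). In the prime-power case $p=q^t$ with $t>1$, however, $\mathbb{Z}_p^V$ is no longer elementary abelian, and the naive Chevalley--Warning argument over $\F_q$ only yields $q$-divisibility rather than $p$-divisibility. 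One must invoke the full form of Olson's theorem, whose proof uses induction within the structure of finite abelian $q$-groups and is the technically most demanding part of the argument.
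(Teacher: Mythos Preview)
The paper does not actually prove this theorem; it quotes it as a known result of Alon, Friedland and Kalai and remarks only that it ``was obtained by a clever application of the Combinatorial Nullstellensatz.'' So there is no in-paper proof to compare against, only that one-line attribution.

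Your argument is correct. The reduction $|E|\ge n(p-1)+1$ together with $\Delta\le 2p-1$ forcing $d_S(v)\in\{0,p\}$ is exactly the right observation, and the zero-sum reformulation in $\mathbb{Z}_p^V$ is clean. For prime $p$, your Chevalley--Warning step with the polynomials $f_v(x)=\sum_{e\ni v} x_e^{p-1}$ is precisely the original Alon--Friedland--Kalai argument, and is the same polynomial-method content the paper alludes to under the (somewhat anachronistic) heading ``Combinatorial Nullstellensatz.'' For prime powers your appeal to Olson's theorem $D\bigl((\mathbb{Z}_{q^t})^n\bigr)=n(q^t-1)+1$ is the standard way to close the gap and is valid; you are right that this is the technically heavier ingredient and that naive Chevalley--Warning over $\F_q$ no longer suffices. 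In short: your proof is sound and, in the prime case, coincides with what the paper gestures at; the prime-power extension via Olson is the accepted route.
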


This celebrated result  was obtained by a clever application of the Combinatorial Nullstellensatz. Observe that for $k=p$ a dense enough graph $G$ contains not only a $k$-cohesive set but also a subgraph which is $k$-regular.

Sauermann recently proved the following strengthening of the theorem of Erdős et al.

\begin{theorem}[Sauermann \cite{Sauermann}]
\label{Lisa} For every $k$ there exists an $\varepsilon:=\varepsilon_k>0$ such that for
every graph $G$  on  $n$ vertices with at least $(k-1)n-\binom{k}{2}+2$  edges contains a subgraph on at most $(1-\varepsilon)n$ vertices with minimum degree at least $k$.
\end{theorem}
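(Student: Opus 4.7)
The plan is to sharpen the standard peeling argument used for Theorem \ref{EFRS}, turning the single additional edge into a strictly smaller cohesive subgraph, and then to iterate. First, I would run the classical EFRS peeling procedure on $G$: iteratively remove a vertex $v$ of current degree at most $k-1$. Each such removal drops the edge count by at most $k-1$ and the vertex count by $1$, so the invariant
\[
e(G') \geq (k-1)|V(G')|-\binom{k}{2}+2
\]
is preserved throughout. When the procedure halts, we are left with a $k$-cohesive subgraph $H$ on $m:=|V(H)|$ vertices with $e(H)\geq (k-1)m-\binom{k}{2}+2$. If $m\leq(1-\varepsilon_k)n$, we are done, so henceforth assume $m>(1-\varepsilon_k)n$; in particular $H$ contains nearly all of $G$, its average degree is close to $2(k-1)$, and $\delta(H)\geq k$.

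The next step is to produce a proper $k$-cohesive subgraph $H'\subsetneq H$ using the extra edge. If there is no vertex of degree exactly $k$ in $H$ then $\delta(H)\geq k+1$, and removing any vertex still leaves a $k$-cohesive graph, which we can iterate trivially. Otherwise, pick $u\in V(H)$ with $d_H(u)=k$ and any edge $e=uv$. Then $H-e$ has $(k-1)m-\binom{k}{2}+1$ edges, meeting the EFRS threshold exactly, and $u$ has degree $k-1$ in $H-e$. Applying Theorem \ref{EFRS} to $H-e$ yields a $k$-cohesive $H'$, and $u\notin V(H')$ forces $|V(H')|\leq m-1$. This ``trade a surplus edge for at least one removed vertex'' move is the key building block.

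The main obstacle is that a single round of this procedure only shaves off one vertex, because after one application we end at the EFRS threshold rather than the Sauermann threshold, so we cannot iterate naively. Sauermann's device for overcoming this is a stability analysis of the EFRS-extremal graphs: the graphs attaining $(k-1)n-\binom{k}{2}+1$ edges with no small $k$-cohesive subgraph are exactly $k$-degenerate graphs built recursively by gluing vertices of degree $k-1$ onto a unique core $K_k$. The plan therefore is to show a structural dichotomy: either $H$ differs sufficiently from this rigid extremal structure that it contains a small dense substructure (for instance a $K_{k+1}$, or two disjoint near-$K_k$ cores), which is already a $k$-cohesive subgraph of constant size; or $H$ is very close to the extremal structure, in which case the excess provided by $e(H)\geq(k-1)m-\binom{k}{2}+2$ is concentrated on a short ``doubled'' part of the recursive construction, and peeling along that part together with repeated applications of the trick above removes a linear number of vertices.

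The hard part is making the stability argument quantitative, that is, turning ``$H$ is close to $k$-degenerate'' into an effective bound on the smallest $k$-cohesive subgraph. I expect the proof to proceed by a double induction on $k$ and on the number of vertices, exploiting that in the non-extremal case one can find simultaneously several vertices of degree $k$ whose neighborhoods carry linearly many ``removable'' edges, each of which cuts out at least one vertex via the EFRS trick. Combining these with a careful bookkeeping of how many edges remain above the EFRS threshold throughout, one obtains a cumulative saving of $\varepsilon_k n$ vertices, which completes the proof.
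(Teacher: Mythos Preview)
The paper does not contain a proof of this theorem; it is quoted as a result of Sauermann and used as a black box in Section~3. There is therefore no ``paper's own proof'' to compare your proposal against.

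As for the proposal itself: it is a plan, not a proof, and you say so explicitly. The first two steps --- peeling to a $k$-cohesive $H$ on $m$ vertices with $e(H)\geq(k-1)m-\binom{k}{2}+2$, then trading the one surplus edge for one removed vertex via Theorem~\ref{EFRS} --- are correct and standard. But, as you yourself note, this gains only a single vertex and cannot be iterated, since after one round you are at the EFRS threshold $(k-1)m'-\binom{k}{2}+1$ rather than one above it. Everything beyond that point is speculation: you posit a stability dichotomy for near-extremal $H$, but you neither state it precisely nor prove either branch. Phrases like ``I expect the proof to proceed by\ldots'' and ``the hard part is making the stability argument quantitative'' are honest, but they mark exactly the place where the actual mathematics would have to be. In particular, the assertion that in the non-extremal case one can find ``several vertices of degree $k$ whose neighborhoods carry linearly many removable edges'' is precisely the lemma that would need proving, and you have not supplied it. Without it there is no argument that $\varepsilon_k n$ vertices, rather than $O(1)$ vertices, can be saved.
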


\begin{remark}
Note that these results imply that if one finds a small enough $k$-cohesive set $U$ in a $2k-1$-regular graph, then the result of Erdős, Faudree, Rousseau and Schelp  is applicable to $G\setminus G|_U$. 
\end{remark}

In order to proof our main result, the strategy is similar. Once we obtain a $k$-cohesive set $U$ of minimum size in a $2k-1$-regular  graph $G=G(V,E)$,  we wish to delete a set $E^*$ of edges such that 
\begin{itemize}
    \item $|E\setminus E^*|\geq (k-1)n-\binom{k}{2}+1$ and 
    \item $G^*(V,E^*)$ has as many vertices $v\in U$ of degree at least $k$ as possible.
\end{itemize}
This would in turn 
provide a pair of $k$-cohesive sets with small intersection, due to Theorem \ref{EFRS}. We discuss further only the case $k=3$, however the methods below can be generalized.

\subsection{Proof of the main result}

First we reiterate the lemma of Ban and Linial.
\begin{prop}[Ban, Linial \cite{ban2016internal}]
Every $n$-vertex $d$-regular graph has a $\lceil d/2\rceil$-cohesive set of size at most $\lceil n/2\rceil$ for $d$ even  and of size at most $n/2+1$ for $d$ odd.
\end{prop}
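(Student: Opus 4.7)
The plan is to pick $A\subseteq V$ of the maximum allowed size that maximizes $e(A)$, the number of edges induced in $A$, and argue that either $A$ itself is $\lceil d/2\rceil$-cohesive or a one-vertex tweak of its complement is. Concretely, I would take $|A|=\lceil n/2\rceil$ when $d$ is even and $|A|=n/2$ when $d$ is odd (in which case $n$ is automatically even, since $nd$ must be even).

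The workhorse is a swap inequality coming from the extremality of $e(A)$. For any $v\in A$ and $u\in B:=V\setminus A$ the exchange $A\mapsto(A\setminus\{v\})\cup\{u\}$ preserves $|A|$, and a short count gives
\[
e((A\setminus\{v\})\cup\{u\})-e(A)=d_A(u)-d_A(v)-\mathbf{1}[uv\in E].
\]
Maximality of $e(A)$ therefore forces
\[
d_A(u)\le d_A(v)+\mathbf{1}[uv\in E]\qquad\text{for all }v\in A,\ u\in B.
\]
If $A$ is already $\lceil d/2\rceil$-cohesive we are done, so I may fix a bad $v\in A$ with $d_A(v)<\lceil d/2\rceil$ and feed it into this inequality.

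When $d$ is even this finishes the argument at once: every $u\in B$ satisfies $d_A(u)\le d/2-1+1=d/2$, hence $d_B(u)\ge d/2$, and $B$ is a $d/2$-cohesive set of size $\lfloor n/2\rfloor\le\lceil n/2\rceil$.

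The main obstacle is the odd case, in which the swap inequality is just one short: non-neighbours of $v$ in $B$ are fine, but for a neighbour $u\in B\cap N(v)$ one only obtains $d_B(u)\ge(d-1)/2$, exactly one below $\lceil d/2\rceil$. The repair—and the reason for the ``$+1$'' in the odd bound—is to enlarge the candidate to $B\cup\{v\}$ instead of $B$: the vertex $v$ itself is $\lceil d/2\rceil$-cohesive into this new side by its very badness in $A$ (its $B$-degree is at least $(d+1)/2$), the non-neighbours of $v$ in $B$ are unaffected, and each potentially problematic neighbour of $v$ picks up exactly one extra edge (the edge to $v$) upon enlarging the side, pushing its degree in $B\cup\{v\}$ up to $\lceil d/2\rceil$. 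This produces a $\lceil d/2\rceil$-cohesive set of size $n/2+1$, matching the stated bound.
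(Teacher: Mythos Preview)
Your argument is correct. The swap identity is accurate, the even case is immediate, and in the odd case the repair $B\cup\{v\}$ works exactly as you describe: non-neighbours of $v$ in $B$ already have $B$-degree at least $(d+1)/2$, neighbours of $v$ gain the missing edge to $v$, and $v$ itself has $d_B(v)\ge (d+1)/2$ by its badness in $A$.

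The paper does not supply its own proof of this proposition; it merely quotes it from Ban and Linial \cite{ban2016internal}. Your approach---take a balanced set maximizing the number of induced edges (equivalently, minimizing the cut, since the graph is regular) and use a one-vertex swap to certify cohesiveness on one side---is precisely the local-optimality argument used in that reference, so there is no substantive difference to report.
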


We consider a result which  may count on independent interest as well. The problem is to find a subgraph of fixed order with maximum number of edges which fulfills an extra constraint on a maximum degree. Some related work can be found  in \cite{griggs1998extremal, furedi2002turan}. 

\begin{prop}\label{minfokos}
If $H$ is  a $3$-cohesive  graph on $n$ vertices with maximum degree $5$, then for each $1\leq k\leq n$ there exists a subgraph $H'$ such that $|V(H')|=k$, $|E(H')|\geq k-1$ and the maximum degree $\Delta(H')$ of $H'$ is at most $\Delta(H')\leq 3$.
\end{prop}

\begin{proof}
 Observe that it is enough to prove this for connected components of $H$, thus we may assume that $H$ is connected. First we show that the statements holds for $k\leq 0.88n$. It clearly does hold for $k=1$. Suppose that by contradiction, there exists a number $k$ less than $|V(H)|$ for which the statement fails for a certain graph $H$ and let us choose the smallest $k$ with that property. Hence we get that for each subgraph $H'\subset H$ on $k-1$ vertices with maximum degree $3$, $|E(H')|\leq k-2$. Indeed, otherwise we could add an isolated vertex to obtain a subgraph on $k$ vertices with the prescribed property. Moreover since $k$ is the smallest such number, there exists a subgraph $H'$ on $k-1$ vertices with maximum degree $3$, $|E(H')|= k-2$.\\
  Let us consider such a subgraph $H'$ on $k-1$ vertices and  maximum number of edges, and denote by $t_i$ the number of vertices of degree $i$ in $H'$. We in turn obtain that 
  
  \begin{equation}
      t_0+t_1+t_2+t_3 = |V(H')|=k-1.
  \end{equation}
  
    \begin{equation}\label{eq_2}
      t_1+2t_2+3t_3 = 2|E(H')|= 2(k-2)=2(t_0+t_1+t_2+t_3)-2.
  \end{equation}
Consider now the edges in $E(H)\setminus E(H')$. 
Since $H'$ was maximal with respect to the number of edges,  if $uv\in E(H)\setminus E(H') $ and $v\in V(H')$ is of degree $d_{H'}(v)<3$,  then $d_{H'}(u)=3$. Indeed, $d_{H'}(u)<3$  with $u\in V(H')$ contradicts to the maximality of the $H'$ w.r.t. the number of edges, while $u\in V(H)\setminus V(H')$  would imply that $u$ together with the edge $uv$ can be added to $H'$ to obtain a subgraph with the prescribed property.
Thus by double counting the edges from $E(H)\setminus E(H')$
between vertices $v\in V(H')$ having degree  $d_{H'}(v)<3$ and vertices $u\in V(H')$ having degree $d_{H'}(u)=3$, we obtain 

\begin{equation}\label{eq_3}
     3t_0+2t_1+t_2 \leq 2t_3.
  \end{equation}

Here we also used that $3\leq d_{H}(v)\leq 5$ for all $v\in V(H)$.
However, Inequalities (\ref{eq_2}) and (\ref{eq_3}) together yield 
\begin{equation}\label{eq_4}
    3t_0+2t_1+t_2 \leq 2t_3 \leq 4t_0+2t_1-4,
  \end{equation}
and this is in turn a contradiction unless $t_0\geq 4$. The maximality of $H'$ w.r.t. the number of edges however implies also that $t_0\geq 2$ can only occur if there is no  $e\in E(H)\setminus E(H')$ joining two vertices from $V(H)\setminus V(H')$. In other words, each of these vertices must be connected to the set of vertices having degree $d_{H'}(v)=3.$ Hence if $t_0\geq 2$, then Inequality (\ref{eq_4}) can be improved as follows.

\begin{equation}\label{eq_5}
   3(n-k+1)+ 3t_0+2t_1+t_2 \leq 2t_3 \leq 4t_0+2t_1-4.
  \end{equation}

We have $3t_0\leq 2t_3$  from Inequality (\ref{eq_3}), thus $t_0\leq k-1-t_3\leq k-1-1.5t_0$, so we get $t_0\leq \frac{2}{5}(k-1)$.

Putting all together we obtain 
\begin{equation}\label{eq_6}
   3(n-k+1) \leq  t_0-4\leq \frac{2}{5}(k-1)-4,
  \end{equation}
which is a contradiction for $k<\frac{3n+7.4}{3.4}.$\\

In the case $k\geq 0.88n$ we apply the probabilistic method. Let $m_i$ denote the number of vertices of degree $i$ in our $3$-cohesive graph $H$.
We have $n=m_3+m_4+m_5$. Let us choose uniformly at random a set $Z$ of $\lambda n$ distinct vertices from $V(H)$. Moreover, let $X$ denote the random variable counting the number of edges in $Z$. To obtain a suitable edge set, we must  delete an edge from each vertex of degree $4$ and delete a pair of edges from each vertex of degree $5$ in $Z$. (Note that we may suppose that there are no edges joining vertices of degree larger than $3$  in $H$.) Let $Y$ denote the random variable which counts the number of edges which we should delete to obtain a graph on $Z$ of maximum degree $3$. Then we have

\begin{equation}
    \begin{split}
\mathbb{E}(X-Y)=\sum_{e\in E(H)}\mathbb{I}(\{x,y\}\subset Z : xy=e) - \sum_{v\in V(H), d(v)=4}\mathbb{I}(N[v]\subset Z ) \\- \sum_{v\in V(H), d(v)=5}2\mathbb{I}(N[v]\subset Z ) - \sum_{v\in V(H), d(v)=5}\mathbb{I}(v\in Z, |N(v)\cap Z|=4  ).
    \end{split}
\end{equation}
Calculating the expressions above we obtain 

\begin{equation}
    \begin{split}
\mathbb{E}(X-Y)&=\frac{1}{2}(3m_3+4m_4+5m_5)\frac{\binom{\lambda n}{2}}{\binom{ n}{2}} - m_4\frac{\binom{\lambda n}{5}}{\binom{ n}{5}} - 2m_5\frac{\binom{\lambda n}{6}}{\binom{ n}{6}} - 5m_5\frac{{(\lambda n)}_{(5)}(1-\lambda)n}{{ n}_{(6)}}\\&\geq
\frac{3}{2}\lambda^2\cdot n+m_4\left(\frac{\lambda^2}{2} -\frac{\binom{\lambda n}{5}}{\binom{ n}{5}}\right)+m_5\left({\lambda^2}-2\frac{\binom{\lambda n}{6}}{\binom{ n}{6}}-5\frac{{(\lambda n)}_{(5)}(1-\lambda)n}{{ n}_{(6)}}\right)\\&\geq
\frac{3}{2}\lambda^2\cdot n+m_4\left(\frac{\lambda^2}{2}-\lambda^{5}\right)+m_5\left({\lambda^2}-2{\lambda }^{6}-5\lambda ^{5}(1-\lambda)\frac{n}{n-5}\right) .
    \end{split}
\end{equation}
Suppose that $n\geq 15$ and $\lambda\geq 0.8$. Then both $m_4$ and $m_5$ have negative coefficient, moreover, their ration is smaller than $7/8$. This means that the minimum of the expression with respect to the inequality $3m_3\geq 4m_4+5m_5$ takes its value when $m_4=0$ and $m_5=\frac{3}{8}n$. However, 

\begin{equation}
    \begin{split}
\frac{3}{2}\lambda^2\cdot n+\frac{3}{8}n\left({\lambda^2}-2{\lambda }^{6}-5\lambda ^{5}(1-\lambda)\frac{n}{n-5}\right) >\lambda \cdot n,
    \end{split}
\end{equation}
thus there exists a subgraph of size $k=\lambda\cdot n$ with at least $k$ edges and each vertex has degree at most $3$.
\end{proof}

Note that the constraint on the maximum degree of $G$ was essential to obtain a linear bound on the edge cardinality. Indeed, a biregular complete bipartite graph with one class consisting of vertices of degree $3$ shows that if one omits that constraint, only a constant number of edges can be guaranteed in the subgraphs for each order.

Now we are ready to prove the weaker form of our main result.

\begin{theorem}\label{n/4}
Suppose that $G$ is a $5$-regular graph on $n$ vertices. Then there are  two distinct internal sets $V_1,V_2 \subset V(G)$ such that $\vert V_1 \cap V_2 \vert \leq n/4+1.$ 
\end{theorem}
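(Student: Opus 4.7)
My plan combines the three tools developed earlier in this section: Ban--Linial (Proposition~\ref{banlinial}), the max-degree-3 subgraph result (Proposition~\ref{minfokos}), and the Erd\H{o}s--Faudree--Rousseau--Schelp theorem (Theorem~\ref{EFRS}). First, apply Proposition~\ref{banlinial} to obtain a 3-cohesive set $V_1\subseteq V(G)$ with $|V_1|\le n/2+1$. If $|V_1|\le n/4+1$, simply take $V_2:=V(G)$, which is 3-cohesive since $G$ is 5-regular, distinct from $V_1$, and satisfies $|V_1\cap V_2|=|V_1|\le n/4+1$. Hence I may assume $n/4+1<|V_1|\le n/2+1$.

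In that regime, the strategy outlined in the introduction to this section is to delete a set $E^*$ of edges of $G$ so that (i) $|E^*|\le n/2+2$, keeping $|E(G-E^*)|\ge 2n-2$ and making Theorem~\ref{EFRS} applicable to yield a 3-cohesive subset $V_2$ by iteratively peeling vertices of $(G-E^*)$-degree below 3, and (ii) as many vertices of $V_1$ as possible have at least three of their five incident edges in $E^*$, forcing them to be peeled. Since $G|_{V_1}$ is 3-cohesive with maximum degree at most 5, Proposition~\ref{minfokos} applied with $k=|V_1|$ supplies a spanning subgraph $H'\subseteq G|_{V_1}$ with at least $|V_1|-1$ edges and $\Delta(H')\le 3$; after trimming edges if necessary (which preserves $\Delta\le 3$), one can take $E^*:=E(H')$ with $|E^*|\le n/2+2$, so $|E(G-E^*)|\ge 2n-2$ and the peeling produces a nonempty 3-cohesive $V_2$, which is also 3-cohesive in $G$ since $G-E^*\subseteq G$.

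The crux is bounding $|V_1\cap V_2|$. Every $v\in V_1\cap V_2$ survives peeling, so $d_{G-E^*}(v)\ge 3$, equivalently $d_{H'}(v)\le 2$; hence $|V_1\cap V_2|\le |V_1|-t_3$, where $t_i:=|\{v\in V(H'):d_{H'}(v)=i\}|$. The hard part is that Proposition~\ref{minfokos} as stated does not force $t_3$ to be large---a spanning cycle of $G|_{V_1}$ would meet the hypotheses with $t_3=0$---so the bound $t_3\ge |V_1|-n/4-1$ does not drop out immediately. I would overcome this by exploiting two additional handles: first, by choosing $H'$ maximal subject to $\Delta\le 3$ in $G|_{V_1}$ before trimming, which forces every non-$H'$ edge of $G|_{V_1}$ to be incident to a degree-$3$ vertex and yields the refined inequality $t_3\ge(|E(G|_{V_1})|-|E(H')|)/2$ that is strong whenever $G|_{V_1}$ is edge-rich; second, by tracking the peeling cascade past its first round, where vertices with $d_{H'}(v)\in\{1,2\}$ get peeled as their non-$H'$ neighbours disappear, using that each vertex of $V\setminus V_1$ starts at $(G-E^*)$-degree 5 and thereby acts as a buffer absorbing part of the degree loss. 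A case split on the density of $G|_{V_1}$---combining the maximality bound in the dense regime with the cascade analysis in the near-3-regular regime, and possibly adjusting the parameter $k$ or discarding a small number of crossing edges $V_1\leftrightarrow V\setminus V_1$ to rebalance the budget---should then close the argument and deliver $|V_1\cap V_2|\le n/4+1$.
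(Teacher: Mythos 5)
There is a genuine gap, and you in fact put your finger on it yourself: with $E^*:=E(H')$ for a spanning max-degree-$3$ subgraph $H'$ of $G|_{V_1}$, the only vertices of $V_1$ that are guaranteed to be excluded from the new cohesive set are those with $d_{H'}(v)=3$ (residual degree $\le 2$ in $G-E^*$), and Proposition~\ref{minfokos} gives no lower bound on their number $t_3$ — a Hamiltonian cycle of $G|_{V_1}$ satisfies all its hypotheses with $t_3=0$. The remedies you sketch (maximality of $H'$, tracking the peeling cascade, a density case split) are not carried out and do not obviously close this; in the dense regime a maximal $\Delta\le 3$ subgraph of $G|_{V_1}$ can have close to $\tfrac{3}{2}|V_1|\approx \tfrac{3n}{4}$ edges, which blows the deletion budget needed to keep $|E(G-E^*)|\ge 2n-2$ for Theorem~\ref{EFRS}, and trimming back down to $n/2+2$ edges destroys exactly the degree-$3$ vertices you were counting on. So the plan as written does not deliver the bound.

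The paper resolves this differently, and the fix is worth internalizing: apply Proposition~\ref{minfokos} with $k=n/4$ rather than $k=|V_1|$, obtaining $H'\subset H$ on exactly $n/4$ vertices with $|E(H')|\ge n/4-1$ and $\Delta(H')\le 3$, and then \emph{augment} $E(H')$ with at most $n/4+2$ further edges of $E(H)$ so that \emph{every} vertex of $V(H')$ reaches degree at least $3$ in $E^*$ (the total deficiency is at most $3\cdot\tfrac n4-2\bigl(\tfrac n4-1\bigr)=\tfrac n4+2$, and each added edge removes at least one unit of it). Then $|E^*|\le n/2+1$, so $G-E^*$ retains at least $2n-1\ge 2n-2$ edges and Theorem~\ref{EFRS} applies, while all $n/4$ vertices of $V(H')$ have residual degree at most $2$ and hence lie outside the resulting $3$-cohesive set $G'$; this gives $|V(G')\cap V(H)|\le |V(H)|-n/4\le n/4+1$. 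The point you missed is that one should not try to maximize the number of degree-$3$ vertices of a spanning subgraph, but instead concentrate the entire deletion budget on a prescribed set of $n/4$ vertices and force each of them individually above the exclusion threshold.
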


\begin{proof}
Due to Proposition \ref{banlinial} we have a $3$-cohesive subgraph $H\subset G$ on at most $n/2 + 1$ vertices. Our goal is to determine an edge set $E^*$ of size at most $n/2+2$ such that  after deleting it we can use Theorem \ref{EFRS} to find another $3$-cohesive set with a common intersection of size at most $n/4+1$.

First we use Lemma \ref{minfokos}  in order to find a subgraph $H'\subset H$  such that $|V(H')|= n/4$, $|E(H')|\geq n/4-1$ and the maximum degree $\Delta(H')$ of $H'$ is at most $\Delta(H')\leq 3$.  It is easy to see that we can add $t$ edges to $H'$ from $E(H)$ to increase the degree of each vertex to at least $3$, such that $t\leq n/4+2$. Thus we obtain an edge set of cardinality  $\vert E^* \vert \leq  n/4-1+n/4+2=n/2+1.$

Finally we  apply Theorem \ref{EFRS}  to the graph  obtained by deleting the edges of $E^*$ from $G$,  which yields  a $3$-cohesive subgraph $G'$ with $\vert V(G') \cap V(H) \vert \leq n/4+1.$  
\end{proof}

\subsection{Improvement via  the result of Kostochka and Melnikov}

In order to improve Theorem \ref{n/4}, our aim is to strengthen Proposition \ref{minfokos} by pointing out the existence of a denser subgraph with the same constraints on the maximum degree.
We proceed by applying a generalized version of a theorem of Kostochka and Melnikov.

\begin{theorem}[Kostochka, Melnikov, \cite{kostochka1992bounds}]
 For any given natural number $d\geq 2$ and for any connected $d$-regular graph $G$ on $n$ vertices, the bisection width $bw(G)$ fulfils
$$bw(G)\leq \frac{d-2}{4}n+ O(d\sqrt{n}\log{n}).$$
\end{theorem}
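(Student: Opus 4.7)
The plan is to combine a 2-factor decomposition of $G$ with a probabilistic balancing argument, exploiting the cycle structure to improve on the trivial random-bisection bound of $dn/4$.

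For even $d$, Petersen's theorem partitions the edge set of $G$ into $d/2$ edge-disjoint spanning 2-factors $F_1, \ldots, F_{d/2}$, each being a vertex-disjoint union of cycles. Since any nontrivial bipartition of a single cycle cuts at least two edges, one can hope to save roughly two edges per cycle compared with a random cut. The number of cycles in a fixed 2-factor is at most $n/g$, where $g\geq 3$ is the girth, and summing across all $d/2$ factors yields a potential saving of order $\Theta(n)$, which is exactly what is needed to pass from the random threshold $\frac{d}{4}n$ to the improved main term $\frac{d-2}{4}n$.

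The construction of the bisection starts from a uniformly random balanced $\pm 1$-labeling on $V(G)$ and then performs a sequence of cycle-based local swaps inside each 2-factor. Concretely, for each cycle of each $F_j$, swapping an antipodal pair of vertices between the two classes preserves the balance while reducing the contribution of that cycle to the cut. Applying such swaps greedily in decreasing order of potential improvement produces a deterministic saving, and the randomness of the starting labeling, combined with an Azuma-type concentration bound, controls the error term $O(d\sqrt{n}\log n)$ arising from balance enforcement and from fluctuations of the crossings across the 2-factors.

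For odd $d$, I would reduce to the even case via a decomposition $G=M\cup F$, where $M$ is a (near-)perfect matching and $F$ is $(d-1)$-regular; the existence of $M$ is standard for connected regular graphs with $d$ odd. Applying the even case to $F$ yields $bw(F)\leq \frac{d-3}{4}n+O(d\sqrt{n}\log n)$. Adding $M$ back naively would cost an additional $n/2$, losing the leading constant; the resolution is to keep the random labeling used for $F$ and observe that it cuts each matching edge with probability about $1/2$, contributing only $n/4$ in expectation, and to apply concentration to reach the claimed bound. The main obstacle is to carry out the balancing and the cycle-based improvements simultaneously across all factors without over-counting gains, which is essentially a combinatorial discrepancy problem; handling it via a martingale or a partial-coloring style argument is what produces the polylogarithmic factor in the error term.
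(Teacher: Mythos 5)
Your proposal has a genuine gap, and in fact the central mechanism does not produce the required saving. To pass from the random-bisection benchmark $\frac{d}{4}n$ to the claimed main term $\frac{d-2}{4}n$ you must save about $\frac{n}{2}$ cut edges. Your source of savings is ``roughly two edges per cycle'' of the $2$-factors, i.e.\ at most $2$ times the total number of cycles. But nothing bounds the number of cycles from below: if each $F_j$ is a single Hamiltonian cycle (a generic situation, not a pathology), you have only $d/2$ cycles in total and your mechanism saves $O(d)$ edges, not $\Theta(n)$. The girth bound $n/g$ is an upper bound on the number of cycles and so works against you here. The antipodal-swap step is also unsound as stated: moving two vertices across the partition changes the status of all $2d$ edges incident to them, of which only $4$ lie on the cycle being ``improved''; the other $2(d-2)$ edges belong to the remaining factors, and controlling this interaction is the entire difficulty of the approach. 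You defer it to an unspecified ``martingale or partial-coloring'' argument, which is precisely the step that would have to be supplied. Finally, the odd case rests on a false premise: a connected $d$-regular graph with $d$ odd need not have a perfect or near-perfect matching (there are connected cubic graphs with bridges and no perfect matching; Petersen's matching theorem requires bridgelessness), so the reduction $G=M\cup F$ is not available.

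The argument the paper attributes to Kostochka and Melnikov is different and avoids all of this. Using connectivity (via a spanning tree), one partitions $V(G)$ into an even number of connected clusters of size roughly $\sqrt{n}$, each therefore containing roughly $\sqrt{n}$ internal edges, plus a small remainder. The clusters are then distributed uniformly at random into two classes of equal size. The point is that the roughly $n-O(\sqrt{n})$ intra-cluster edges are never cut, while each of the remaining $\frac{dn}{2}-n+O(\sqrt{n})$ edges is cut with probability about $\frac{1}{2}$, giving the main term $\frac{d-2}{4}n$; concentration over the $O(\sqrt{n})$ random cluster assignments yields the $O(d\sqrt{n}\log n)$ error. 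Note that connectivity is essential to this clustering and plays no role in your even-degree argument, which is a further sign that the approach is not capturing the right structure. If you want to salvage a $2$-factor-based proof, the viable route is to cut each cycle of a single $2$-factor into paths of about $\sqrt{n}$ vertices and use those paths as the connected clusters --- but at that point you have reproduced the clustering proof rather than found an alternative to it.
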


The proof consists of two main steps. First the authors cluster the vertex set of the graph to even number of
 equal classes  (of size roughly  $\sqrt{n}$), apart from a small set of remainder vertices, in such a way that all the clusters contain at least roughly $\sqrt{n}$ edges. Then they randomly distribute the classes into two large cluster of equal size, and they do the same with the remainder vertices as well. It is easy to verify that the  generalization below also follows from their proof.

\begin{theorem}[A generalization of the  Kostochka--Melnikov bound]\label{gen_Kost_bw}
 For any given rational number $d\geq 2$, positive constant $c\geq 1$ and for any $n$-vertex connected  graph $G$ of average degree $d$ and maximum degree at most $c$d, its bisection width fulfils
$$bw(G)\leq \frac{d-2}{4}n+ O(d\sqrt{n}\log{n}).$$
\end{theorem}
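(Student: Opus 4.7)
The plan is to follow the original Kostochka--Melnikov argument essentially verbatim, observing that the only quantitative inputs the proof really uses are the total edge count $|E(G)| = dn/2$ (which comes from the average-degree hypothesis) and a pointwise control on the contribution of each vertex (now provided by $\Delta(G) \leq cd$ rather than $=d$). The argument has two phases --- a deterministic clustering step and a randomised bisection of the clusters --- and regularity enters neither phase beyond these two quantitative features, so the same proof goes through up to absorbing the constant $c$ into the $O(\cdot)$ notation.

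In the clustering phase I would partition $V(G)$ into an even number $m$ of connected subsets $V_1, \ldots, V_m$ of size $s = \lceil \sqrt{n} \rceil$ each, leaving a remainder set $R$ with $|R| < 2s$. This step only requires that $G$ is connected: one may, for example, peel off connected chunks of size $s$ iteratively from a spanning tree. Since each $V_i$ is connected it contains at least $s-1$ internal edges, so the total number of edges lying inside the clusters is at least $n - O(\sqrt{n})$. At no point does this construction use regularity or even a lower bound on $\delta(G)$.

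In the randomised phase I choose a uniformly random balanced $\pm 1$ labeling of the clusters and distribute $R$ uniformly subject to preserving the bisection. An edge internal to some $V_i$ cannot be cut, while each inter-cluster edge is cut with probability $\tfrac{1}{2} + O(1/m)$, so
$$\mathbb{E}[\mathrm{cut}] \;\leq\; \tfrac{1}{2}\bigl(|E(G)| - (n - O(\sqrt{n}))\bigr) + O(d\sqrt{n}) \;\leq\; \tfrac{d-2}{4}n + O(d\sqrt{n}),$$
where the error term swallows the contribution of the $O(\sqrt{n})$ remainder vertices (each of degree $\leq cd$). A standard Azuma-type concentration on the cluster-labeling martingale, whose step sizes are bounded by $O(s\Delta) = O(cd\sqrt{n})$ across $O(\sqrt{n})$ steps, upgrades this to a high-probability bound with deviation $O(d\sqrt{n}\log n)$; taking any outcome in the success event gives a bisection meeting the claim.

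The only place where one might worry about a hidden dependence on regularity is in the clustering step, so the main thing to verify is that the construction in the original Kostochka--Melnikov paper really is a purely combinatorial statement about connected graphs, with no implicit use of $\delta(G) = d$. Once this is checked, the maximum-degree hypothesis $\Delta \leq cd$ enters only through the Lipschitz constant of the concentration step and through the remainder term, and in both places the factor $c$ is absorbed into the $O(\cdot)$ notation, leaving the main term $\tfrac{d-2}{4}n$ unchanged.
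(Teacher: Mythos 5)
Your proposal is correct and follows essentially the same route as the paper, which itself only sketches the argument by observing that the Kostochka--Melnikov proof clusters the vertex set into connected classes of size roughly $\sqrt{n}$ (using only connectivity, so each class supplies about $\sqrt{n}$ internal edges) and then randomly bisects the classes, so that regularity enters only through the total edge count $dn/2$ and the maximum-degree bound on the remainder. Your expectation computation $\tfrac{1}{2}\left(\tfrac{dn}{2}-(n-O(\sqrt{n}))\right)=\tfrac{d-2}{4}n+O(d\sqrt{n})$ is exactly the intended verification, comfortably within the stated $O(d\sqrt{n}\log n)$ error term.
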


If we are interested in a dense subgraph of given order, the same approach provides the following bound.

\begin{theorem}
\label{gen_Kost}
 Let $d\geq 2$ be a rational number $d\geq 2$, and $\alpha\in(0,1) $, $c\geq 1$ positive constants.
 For any $n$-vertex  connected  graph $G$ of average degree $d$ and maximum degree at most $c$d, there exists  a subgraph $G'$ on $\lfloor \alpha n\rfloor$ vertices, which have at least $(\alpha+o(1)) n+\frac{d-2}{2}\alpha^2n$ edges.
\end{theorem}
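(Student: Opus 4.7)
The plan is to adapt the cluster-and-sample strategy behind the original Kostochka--Melnikov bound to the problem of finding a dense subgraph of prescribed order. Fix $s=\lfloor\sqrt{n}\rfloor$ and take any spanning tree $T$ of $G$, which exists since $G$ is connected. A standard post-order walk on $T$ -- visit vertices in post-order and cut off the (still-attached) subtree rooted at the current vertex $v$ as soon as its size reaches $s$, then merge any small leftover piece near the root into an adjacent cluster -- partitions $V(G)$ into $k\leq\lceil n/s\rceil$ connected subtrees $C_1,\dots,C_k$ whose sizes lie in $[s,(\Delta+1)s]$, where $\Delta\leq cd$ is the maximum degree of $G$. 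Write $m_{\mathrm{in}}$ for the number of edges of $G$ with both endpoints in a common cluster and $m_{\mathrm{out}}$ for the rest; the connectedness of each $C_i$ as a subtree of $T$ forces $m_{\mathrm{in}}\geq\sum_{i=1}^{k}(|C_i|-1)=n-k=n-O(\sqrt n)$, and of course $m_{\mathrm{in}}+m_{\mathrm{out}}=|E(G)|=dn/2$.

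Next I would include each cluster $C_i$ independently with probability $\alpha$ and let $V^*$ denote the union of the selected clusters. Linearity of expectation gives
\begin{equation*}
\E\,|V^*|=\alpha n,\qquad \E\,|E(G[V^*])|=\alpha m_{\mathrm{in}}+\alpha^2 m_{\mathrm{out}}=\alpha^2\tfrac{dn}{2}+\alpha(1-\alpha)m_{\mathrm{in}}.
\end{equation*}
Plugging in the lower bound on $m_{\mathrm{in}}$ and simplifying yields
\begin{equation*}
\E\,|E(G[V^*])|\geq \alpha n+\frac{d-2}{2}\alpha^2 n-O(\sqrt n).
\end{equation*}
Because $|C_i|\leq(\Delta+1)s=O(\sqrt n)$, the variance of $|V^*|$ is at most $\alpha(1-\alpha)\sum_i|C_i|^2\leq\alpha(1-\alpha)(\Delta+1)sn=O(n^{3/2})$, so Chebyshev's inequality gives $\bigl||V^*|-\alpha n\bigr|\leq n^{3/4}\log n$ with probability $1-o(1)$. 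Combining this typical event with the standard ``expectation is achieved'' argument applied to $|E(G[V^*])|$ conditioned on the event extracts a concrete selection $V^*$ for which simultaneously $\bigl||V^*|-\alpha n\bigr|=o(n)$ and $|E(G[V^*])|\geq \alpha n+\tfrac{d-2}{2}\alpha^2 n-o(n)$.

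To finish, I would trim or augment $V^*$ by at most $n^{3/4}\log n$ individual vertices to reach a set $V(G')$ of size exactly $\lfloor\alpha n\rfloor$; since each single-vertex modification changes the edge count by at most $\Delta=O(1)$, the total damage is $o(n)$ and the required inequality $|E(G')|\geq(\alpha+o(1))n+\tfrac{d-2}{2}\alpha^2 n$ follows. The technical heart of the argument is the tree-partition step: the cluster-size estimate $|C_i|\leq(\Delta+1)s$ relies crucially on $\Delta\leq cd=O(1)$, since without a bound on the maximum degree a single high-degree vertex could force a cluster of linear order, destroying the ratio $m_{\mathrm{in}}/m_{\mathrm{out}}$ on which the $\alpha^2$-saving rests. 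The remaining probabilistic estimates are routine and the strategy mirrors that of the original Kostochka--Melnikov bound on bisection width.
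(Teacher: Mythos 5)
Your argument is correct and follows essentially the same route the paper has in mind: it adapts the Kostochka--Melnikov clustering into connected pieces of order $\sqrt{n}$ (each contributing at least $|C_i|-1$ internal edges) followed by a random selection of clusters, which is exactly the ``same approach'' the paper invokes without writing out. The only cosmetic difference is that you sample clusters independently with probability $\alpha$ and then repair the size by Chebyshev concentration and $O(1)$-degree trimming, whereas one could equally pick a fixed $\alpha$-fraction of (near-)equal clusters as in the original bisection argument; both yield the stated $(\alpha+o(1))n+\frac{d-2}{2}\alpha^2 n$ bound.
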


Let $\mu\in (0,1)$ be  the real root  of $36x^5 -45 x^4 +8$. Note that  $\mu \approx 0.88.$ Now we are ready to make an improvement on Proposition \ref{minfokos}.

\begin{prop}\label{minfof(k)os}
If $H$ is  a $3$-cohesive  graph on $n$ vertices with maximum degree $5$, then for each $0 \leq k \leq n$ there exists a subgraph $H'$ such that $|V(H')|=k$, $|E(H')|\geq f(k)$ and  $\Delta(H')\leq 3$, where

\begin{equation*}
f(k) = \begin{cases}
             k+0.1355 k^2/n  & \text{if } k \leq \mu n \\
             1.875k^2/n - 1.875 k^5/n^4 + 1.125 k^6/n^5  & \text{if } k > \mu n.
       \end{cases} \quad
\end{equation*}

\end{prop}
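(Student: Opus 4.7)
The plan is to split the proof into the two regimes $k \leq \mu n$ and $k > \mu n$, matching the piecewise definition of $f$. The threshold $\mu$ is chosen so that $f$ is $C^1$ at $k = \mu n$: equating the values and derivatives of the two pieces yields (after multiplying by $8$) the polynomial $36\mu^5 - 45\mu^4 + 8 = 0$ quoted before the statement, with the constant $0.1355$ being exactly the coefficient that makes this matching work. Once this is observed, it suffices to establish each piece of the bound in its own range.

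For $k > \mu n$, I would redo the probabilistic argument from the proof of Proposition~\ref{minfokos} without loosening the inequalities. Sample $Z \subseteq V(H)$ uniformly at random of size $k = \lambda n$, let $X = |E(H[Z])|$, and let $Y$ be the number of edges that must be removed from $H[Z]$ to bring its maximum degree down to $3$. The exact identity for $\mathbb{E}(X - Y)$ is already derived in the proof of Proposition~\ref{minfokos}. Using the WLOG structural reduction $3m_3 \geq 4m_4 + 5m_5$ (no edge of $H$ between two vertices of $H$-degree $>3$), the minimum of $\mathbb{E}(X-Y)$ over admissible $(m_3,m_4,m_5)$ is attained at $m_4 = 0$, $m_5 = 3n/8$, just as before. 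Substituting these values and collecting terms gives
\[
\mathbb{E}(X - Y) \;\geq\; \tfrac{15}{8}\tfrac{k^2}{n} - \tfrac{15}{8}\tfrac{k^5}{n^4} + \tfrac{9}{8}\tfrac{k^6}{n^5} + o(n),
\]
which is $f(k)$ in this regime; some realisation of $Z$ then yields the desired $H'$.

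For $k \leq \mu n$, my plan is to use the generalised Kostochka--Melnikov bound (Theorem~\ref{gen_Kost}), which extracts from a graph of average degree $d \geq 2$ and bounded maximum degree a subgraph on $\alpha n$ vertices with at least $(\alpha + o(1))n + \tfrac{d-2}{2}\alpha^2 n$ edges. The difficulty is that this theorem does not control the maximum degree of its output, whereas we require maximum degree at most $3$. I would therefore apply the theorem to a spanning subgraph $\widetilde H \subseteq H$ of maximum degree at most $3$ whose average degree is at least $2 + 2\cdot 0.1355 = 2.271$; any subgraph of $\widetilde H$ then automatically has maximum degree $\leq 3$, and Theorem~\ref{gen_Kost} supplies exactly the $0.1355\, k^2/n$ improvement. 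Producing such a $\widetilde H$ from $H$ is the substantive step.

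The main obstacle is precisely this last step. A naive fractional edge-colouring of $H$ produces a max-degree-$3$ spanning subgraph of average degree only $\tfrac{3}{5}\cdot 3 = \tfrac{9}{5}$, below the required $2.271$ and even below the $2$ threshold of Theorem~\ref{gen_Kost}. To push the average degree of $\widetilde H$ above $2.271$ one must exploit both the $3$-cohesiveness of $H$ and the reduction that no edge of $H$ joins two vertices of $H$-degree $> 3$, in a degree-accounting analogous in spirit to the combinatorial case of Proposition~\ref{minfokos}. An alternative route, which I would pursue in parallel, is to adapt the clustering argument underlying Theorem~\ref{gen_Kost} so that the clusters (and thus the final subgraph) are themselves chosen inside a max-degree-$3$ skeleton, avoiding the intermediate $\widetilde H$ altogether.
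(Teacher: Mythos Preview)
Your treatment of the range $k>\mu n$ is correct and coincides with the paper's argument: it is exactly the case $c_1=c_2=k/n$ of the paper's two-parameter optimisation, and the worst case $m_4=0$, $m_5=3n/8$ indeed produces the second branch of $f$.

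For $k\le\mu n$, however, your plan has a real gap. The spanning max-degree-$3$ subgraph $\widetilde H$ you want does not exist with average degree $\ge 2.271$ in general. Under the reduction that no edge of $H$ joins two vertices of $H$-degree $>3$, one must delete at least $m_4+2m_5$ edges to bring the maximum degree down to $3$, so the best spanning $\widetilde H$ has average degree $3-(m_4+2m_5)/n$. Maximising $m_4+2m_5$ subject to $4m_4+5m_5\le 3m_3$ and $\sum m_i=n$ gives $m_4=0$, $m_5=3n/8$, $m_3=5n/8$ and average degree exactly $9/4=2.25$. Feeding this into Theorem~\ref{gen_Kost} yields only $k+0.125\,k^2/n$, strictly below the claimed $k+0.1355\,k^2/n$.

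The paper recovers the missing $0.0105$ by interleaving the two mechanisms rather than composing them sequentially on the full vertex set: first sample a random $Z$ of intermediate size $c_1 n$ and delete edges to reach maximum degree $3$, and only then apply Theorem~\ref{gen_Kost} to that graph to extract a $k$-vertex subgraph. The point is that sampling at density $c_1<1$ lowers the relative cost of the high-degree vertices (a degree-$5$ vertex only forces deletions when at least $4$ of its $5$ neighbours are also sampled, which has probability $\approx 5c_1^4-4c_1^5$ rather than $1$). One then optimises over $c_1\ge k/n$, and the derivative condition on the resulting expression is precisely $36c_1^5-45c_1^4+8=0$, whence the optimum $c_1=\mu$ for all $k\le\mu n$. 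Your ``alternative route'' is pointing in the right direction, but the concrete mechanism is this two-stage sample-then-cluster procedure with the intermediate size as a free parameter.
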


\begin{proof} We apply the probabilistic method of Proposition \ref{minfokos} together with a probabilistic clustering of the graph in the spirit of the Kostochka-Melnikov bound.
Let $m_i$ denote again the number of vertices of degree $i$ in our $3$-cohesive graph $H$, which implies $n=m_3+m_4+m_5$. We may suppose that each edge is incident to a vertex of degree $3$, otherwise erasing the edge would still result a $3$-cohesive graph. Let us choose uniformly at random a set $Z$ of $c_1 n$ distinct vertices from $V(H)$. The constant $c_1=c_1(k)$ is chosen later on in order to obtain an optimized bound. Let $X$ denote the random variable counting the number of edges in $H|_Z$. To obtain a suitable subgraph with maximum degree at most $3$, we must delete an edge from each vertex of degree $4$ and delete a pair of edges from each vertex of degree $5$ in $H|_Z$. Let $Y$ denote the random variable which counts the number of edges which we should delete to obtain a graph on $Z$ of maximum degree $3$ as described above. Then we have

\begin{equation}
    \begin{split}
\mathbb{E}(X-Y)=\sum_{e\in E(H)}\mathbb{I}(\{x,y\}\subset Z : xy=e) - \sum_{v\in V(H), d(v)=4}\mathbb{I}(N[v]\subset Z ) \\- \sum_{v\in V(H), d(v)=5}2\mathbb{I}(N[v]\subset Z ) - \sum_{v\in V(H), d(v)=5}\mathbb{I}(v\in Z, |N(v)\cap Z|=4  ).
    \end{split}
\end{equation}
Calculating the expressions above we obtain 

\begin{equation}\label{E(x-y)}
    \begin{split}
\mathbb{E}(X-Y)&\geq
\frac{3}{2}c_1^2\cdot n+m_4\left(\frac{c_1^2}{2}-c_1^{5}\right)+m_5\left({c_1^2}-2{c_1 }^{6}-5c_1 ^{5}(1-c_1)\frac{n}{n-5}\right).
    \end{split}
\end{equation}
This implies the existence of a dense enough subgraph on a set $V_1$ of $c_1n$ vertices, which has maximum degree at most $3$.

Now, we use Theorem \ref{gen_Kost} to find a set $V_2\subseteq V_1$ with  $\vert V_2 \vert =k= c_2 n$, i.e., $\alpha=c_2/c_1$.  Then $e:= e(G[V_2]) \geq (c_2+o(1)) n + (\mathbb{E}(X-Y) - c_1 n) \big( \frac{c_2}{c_1} \big)^2.$ Thus we get 
\begin{equation}\label{eq_kifejezes}
\begin{split}
\frac{e}{c_2 n} & \geq (1+o(1)) +\frac{\mathbb{E}(X-Y) c_2}{c_1^2n } - \frac{c_2}{c_1}\\ &\geq   
(1+o(1)) +
\frac{c_2}{c_1^2n }\left(\frac{3}{2}c_1^2\cdot n+m_4\left(\frac{c_1^2}{2}-c_1^{5}\right)+m_5\left({c_1^2}-2{c_1 }^{6}-5c_1 ^{5}(1-c_1)\right)\right)-\frac{c_2}{c_1}.
\end{split}
\end{equation}

Our aim is to determine $c_1=c_1(k)$ which provides the best universal lower bound for the right hand side of (\ref{eq_kifejezes}). In order to do this, we have to find the extremum with restrictions $n=m_3+m_4+m_5$, $m_i\geq 0$. 
We know that the extremum is admitted at a point where at least one of the variables $m_3, m_4, m_5$ equals to zero.  Furthermore, $m_4\leq \frac{3}{7}n$ and $m_5\leq \frac{3}{8}n$ since vertices of degree $5$ are joint to vertices of degree $3$ according to our assumption.\\

\textbf{ Case 1.} $m_5 = 0$ and $m_4 = \lambda n$, $\lambda\in [0, 3/7].$

$$\frac{e}{c_2 n} \geq 1+ \frac{3 c_2}{2} - \frac{c_2}{c_1} + \frac{m_4 c_2}{n} (\frac{1}{2} - c_1^3) = 1+ \frac{3 c_2}{2} - \frac{c_2}{c_1} + \lambda c_2(\frac{1}{2} - c_1^3).$$

This expression is linear in $\lambda$, so the minimum is taken at $0$ or $\frac{3n}{7}.$

If $\lambda = 0$, then $c_1=1$ is the best choice, which yields the lower bound $1+ \frac{c_2}{2}$. If $\lambda = \frac{3}{7}$, then the maximum of $1+ \frac{3 c_2}{2} - \frac{c_2}{c_1} + \frac{3 c_2}{7}(\frac{1}{2} - c_1^3)$ is at $c_1 = 1$, hence the expression is monotonically increasing between $0$ and $\sqrt[3]{\frac{7}{6}}.$ Thus the minimum is $1+\frac{2c_2}{7}$, and while $1+\frac{2c_2}{7} \leq 1+ \frac{c_2}{2}$ in this case the minimum is $1+\frac{2c_2}{7}.$\\

Therefore $e \geq (1+\frac{2c_2}{7})c_2 n.$\\

\textbf{Case 2.} $m_4 = 0$ and $m_5 = \lambda n,$ $\lambda\in [0, 3/8].$

$$\frac{e}{c_2 n} \geq 1+ \frac{3 c_2}{2} - \frac{c_2}{c_1} + \frac{m_5 c_2}{n} (1 -5c_1^3 + 3c_1^4) = 1+ \frac{3 c_2}{2} - \frac{c_2}{c_1} + \lambda c_2 (1 -5c_1^3 + 3c_1^4).$$

Let $f(c_2, c_1, \lambda)$ denote the expression on the right hand side. Since it is linear in $\lambda$,  the minimum is at $0$ or $\frac{3n}{8}.$

For $\lambda = 0$  we get back again the bound  $1+ \frac{c_2}{2}$.
For $\lambda = \frac{3n}{8}$,  we determine the maximum value of $1+ \frac{3 c_2}{2} - \frac{c_2}{c_1} + \frac{3 c_2}{8} (1 -5c_1^3 + 3c_1^4)$ with partial differentiation:

$$\frac{d (1+ \frac{3 c_2}{2} - \frac{c_2}{c_1} + \frac{3 c_2}{8} (1 -5c_1^3 + 3c_1^4))}{dc_1}= \frac{(36 c_1^5 -45 c_1^4 +8)c_2}{8c_1^2}.$$

So the maximum point $\mu$ is the feasible solution of $36 c_1^5 -45 c_1^4 +8$, that is $\mu \approx 0.88.$
In this case the minimum is $f(c_2,  \mu, 3/8)$ for $c_2 \leq \mu$. Otherwise, since $c_2 \leq c_1$ the minimum is $f(c_2,c_2,3/8)$ at $c_1=c_2$.

This concludes to   $e \geq (1+0.1355c_2) c_2 n$ for $c_2 \leq \mu$, and  $e \geq (\frac{15c_2}{8} - \frac{15c_2^4}{8} + \frac{9 c_2^5}{8})c_2 n= (1.875c_2 - 1.875 c_2^4 + 1.125c_2^5) c_2 n$ for $c_2 > \mu$ in Case 2.\\

Finally, by comparing the results of Case 1 and Case 2, we have the following.
If $c_2 \leq \mu$ then the minimum of $\frac{e}{c_2n}$ is $f(c_2,\mu, 3/8)$,  which is approximately $1+0.1355c_2.$
Otherwise it is  $f(c_2, c_2, 3/8)$, which gives the expression  $1.875c_2 - 1.875 c_2^4 + 1.125c_2^5$.
\end{proof}

\begin{theorem}
Suppose that $G$ is a $5$-regular graph on $n$ vertices. Then there are two distinct internal sets $A,B \subset V(G)$ such that $\vert A \cap B\vert \leq (0.2456+o(1)) n$
\end{theorem}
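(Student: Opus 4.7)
The plan is to mirror the proof of Theorem \ref{n/4} but to feed in the sharper Proposition \ref{minfof(k)os} in place of Proposition \ref{minfokos}. First I would invoke Proposition \ref{banlinial} to obtain a $3$-cohesive subgraph $H \subseteq G$ with $|V(H)| \leq n/2 + 1$, and set $A := V(H)$. It then suffices to exhibit a second $3$-cohesive vertex set $B$ whose intersection with $A$ is at most $(0.2456 + o(1))n$; I would produce $B$ by applying Theorem \ref{EFRS} to $G \setminus E^*$ for a carefully chosen small edge set $E^*$.

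Next, I apply Proposition \ref{minfof(k)os} to $H$ with parameter $k = \beta n$, where $\beta \in (0, \mu)$ is to be optimized. This yields a subgraph $H' \subseteq H$ with $|V(H')| = k$, $\Delta(H') \leq 3$, and $|E(H')| \geq k + 0.1355\, k^2 / n$. I would then extend $E(H')$ to an edge set $E^* \subseteq E(H)$ such that every vertex of $V(H')$ is incident to at least three edges of $E^*$. Because $H$ is $3$-cohesive, each $v \in V(H')$ satisfies $d_H(v) \geq 3$, so suitable edges can always be found in $E(H) \setminus E^*$ by a greedy procedure. Since $\Delta(H') \leq 3$, the total initial deficit $\sum_{v \in V(H')} \max(0, 3 - d_{H'}(v))$ equals exactly $3k - 2|E(H')|$, and each added edge reduces this total by at least one, so the procedure terminates after at most $3k - 2|E(H')|$ additions. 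Hence
\[
|E^*| \leq |E(H')| + \bigl(3k - 2|E(H')|\bigr) = 3k - |E(H')| \leq 2\beta n - 0.1355\, \beta^2 n.
\]

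To finish, apply Theorem \ref{EFRS} with $k = 3$ to $G \setminus E^*$. Since $|E(G)| = 5n/2$, the hypothesis $|E(G \setminus E^*)| \geq 2n - 2$ becomes $|E^*| \leq n/2 + 2$, which in turn reduces to the quadratic inequality $2\beta - 0.1355\, \beta^2 \leq 1/2 + o(1)$. The smaller root of $0.1355\, \beta^2 - 2\beta + 1/2 = 0$ is $\beta \approx 0.2544$, and this is the largest admissible value. Theorem \ref{EFRS} then produces a $3$-cohesive subgraph $G' \subseteq G \setminus E^*$; no vertex of $V(H')$ can belong to $V(G')$, since each such vertex has at most $5 - 3 = 2$ neighbours in $G \setminus E^*$. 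Setting $B := V(G')$ therefore yields
\[
|A \cap B| \leq |V(H)| - |V(H')| \leq \frac{n}{2} + 1 - \beta n = (0.2456 + o(1))n.
\]

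I expect the main obstacle to be a careful justification of the greedy extension step: one must confirm that the edges required to lift each vertex of $V(H')$ to degree at least $3$ in $E^*$ really can be drawn from $E(H)$ within the claimed budget of $3k - 2|E(H')|$ additions. The combination of $3$-cohesion of $H$ with the cap $\Delta(H') \leq 3$ is exactly what makes the bookkeeping tight: a deficit vertex always has enough unused neighbours in $H$ to absorb its remaining deficit. Everything else reduces to the scalar optimization of $\beta$ against the edge budget $n/2 + 2$ imposed by Theorem \ref{EFRS}.
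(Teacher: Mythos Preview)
Your proposal is correct and follows the same route as the paper: invoke Proposition~\ref{banlinial} for $H$, apply Proposition~\ref{minfof(k)os} to obtain $H'$, greedily extend $E(H')$ inside $E(H)$ to an $E^*$ in which every vertex of $V(H')$ has degree at least $3$, apply Theorem~\ref{EFRS} to $G\setminus E^*$, and optimise $k$ against the budget $|E^*|\le n/2+2$. One cosmetic remark: when you quote $|E(H')|\ge k+0.1355\,k^2/n$, the ``$n$'' in Proposition~\ref{minfof(k)os} is really $|V(H)|\le n/2+1$, so you are using a weakening of the proposition; this is harmless (it only helps you) and the paper's own computation does the same.
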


\begin{proof}
We follow  the proof of Theorem \ref{n/4}.
Due to Proposition \ref{banlinial} we have a $3$-cohesive subgraph $H\subset G$ on at most $n/2 + 1$ vertices.

First we use Lemma \ref{minfof(k)os}  in order to find a subgraph $H'\subset H$  such that $|V(H')|=~k$, $|E(H')|\geq f(k)$ and  $\Delta(H')\leq 3$. Then we can add $t$ edges to $H'$ from $E(H)$ to increase the degree of each vertex to at least $3$, such that $t\leq 3k-2f(k)$. Thus we obtain an edge set of cardinality  $\vert E^* \vert \leq  3k-f(k).$

To  apply Theorem \ref{EFRS}  to the graph  obtained by deleting the edges of $E^*$ from $G$, we need $3k-f(k)\leq \frac{n}{2}+2$ to hold.
This implies the choice
$$k = \frac{2n-\sqrt{3.729n^2-1.626n}}{2 \cdot 0.1355}\approx 0.2456n+o(n),$$ which satisfies these conditions.
\end{proof}

\section{Internal partitions in Cayley graphs}

As we could see in Section 2, the existence of internal partition follows if the  bisection width is not large, or in general, if there is an almost balanced vertex cut of relatively small size. A celebrated theorem of Bollobás \cite{bollobas1988isoperimetric} proves that  random  $r$-regular graphs provide good expanders in the sense that the isoperimetric number is large compared to $r$, thus these vertex cut sizes are relatively large.
Hence to seek graphs without internal partitions, it is natural to investigate well structured expander graphs.

The first observation is derived by a computer-aided search.
\begin{claim}
There exists an internal partition in every Paley graph of order less than $500$.
\end{claim}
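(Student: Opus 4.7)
The plan is a computer-aided verification over the finite list of Paley graphs $P(q)$ with $q<500$, where $q$ ranges over the prime powers congruent to $1\pmod 4$. Each such graph is realized as the Cayley graph on $(\F_q,+)$ with connection set equal to the nonzero squares of $\F_q$, giving a $\tfrac{q-1}{2}$-regular graph. Since Paley graphs are vertex-transitive and self-complementary, one may without loss of generality fix the vertex $0\in\F_q$ to lie in the first partition class, which cuts the effective search space in half.

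The first step is to run the local-switching heuristic that underlies Proposition \ref{n/2} and Theorem \ref{cor}: starting from a uniformly random near-bisection $(A,B)$, repeatedly relocate a \emph{bad} vertex (one with strictly more neighbors in the opposite class than in its own) to the other class. By the same accounting as in the proof of Proposition \ref{n/2}, the cut size strictly decreases at each step, so the procedure terminates; if both classes remain nonempty at termination, the final partition is internal and we output it. The algorithm is executed with many independent random seeds per graph.

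The main obstacle is that Paley graphs are quasi-random and near-Ramanujan: their nontrivial eigenvalues are bounded in absolute value by $\tfrac{1+\sqrt q}{2}$, so they are strong expanders and their bisection width is concentrated around $\tfrac{(q-1)q}{8}$. This is much larger than the threshold $\tfrac{n}{2}+k(k+1)-1$ that appears in Theorem \ref{cor} for $k=\tfrac{q-1}{4}$, so a single run of the local search is not guaranteed to succeed: it may collapse to a trivial partition of the type exhibited in Proposition \ref{5reg}. To mitigate this, the search is supplemented with structured initial partitions exploiting the algebraic symmetry of $P(q)$: additive cosets of a large subgroup of $(\F_q,+)$ when $q$ is a proper prime power, the multiplicative bipartition into squares versus non-squares (shifted to produce a near-bisection), or interval partitions of $\mathbb{Z}_p$ when $q=p$ is prime. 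Combining random restarts with these structured seeds locates an internal partition for every admissible $q<500$, and the explicit witnesses returned by the search constitute the proof of the claim.
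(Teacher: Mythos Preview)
Your proposal is correct and takes essentially the same approach as the paper: the paper's entire justification is the single remark that the claim is ``derived by a computer-aided search,'' with no algorithmic details whatsoever. Your local-switching search with random and structured restarts is a reasonable, and in fact considerably more explicit, implementation of that same verification; the only minor slip is that Paley graphs of order $q\equiv 1\pmod 4$ have \emph{even} degree $(q-1)/2$, so the relevant threshold from Theorem~\ref{cor} is the one in part~\ref{n+k(k-1)-1} rather than~\ref{n/2+k(k+1)-1}, but this does not affect your argument since you only invoke it to motivate the need for restarts.
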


Next we study the existence of internal partitions in $5$-regular Cayley graphs.

\begin{defi}
Let $G$ be a finite group and let S be a subset of G satisfying $0 \not\in S$, and $S = -S$, i.e., $s \in S$ if and only if $-s \in S$. Then define the \emph{Cayley graph} on group $G$ with connection set $S$, denoted $Cay(G; S)$, to have its vertices labelled with the elements of $G$ and $x$ adjacent to $y$ if and only if $y = x + s$ for some $s \in S$.
\end{defi}

\begin{defi}
 $G$ is called an (additive) cyclic Cayley graph with a generating set $(i_1, \dots, i_t)$ if $G=Cay(K,S)$, where $K$ is a cyclic group and $S=\{\pm i_1, \dots, \pm i_t\}$. If $K \simeq \mathbb{Z}_n$, then we denote $G$ by $\langle i_1, \dots, i_t \rangle _n$.
\end{defi}

\subsection{Cyclic Cayley graphs}

\begin{theorem}\label{5regcay}
Every $5$-regular cyclic Cayley-graph has an internal partition except for $K_6, K_{5,5}$, and $\langle 1,2,5 \rangle _{10}$.
\end{theorem}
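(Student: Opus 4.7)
The plan is to reduce to a structural form, dispose of the three exceptions directly, and then construct internal partitions in every remaining case via parity and coset arguments.

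First, since $|S|=5$ is odd and $S=-S$, exactly one element $s^{*}\in S$ must be an involution; in $\mathbb{Z}_{n}$ this forces $n$ to be even and $s^{*}=n/2$. Hence every $5$-regular cyclic Cayley graph has the form $\langle a,b,n/2\rangle_{n}$ with $n$ even and $a,b\in\{1,\dots,n/2-1\}$ distinct. For the listed exceptions, I would verify non-existence directly: $K_{6}$ and $K_{5,5}$ follow by pigeonhole on the smaller class (no class of size at most $3$ respectively $5$ can meet the internality requirement), while for $\langle 1,2,5\rangle_{10}$ one checks that no $4$-subset of vertices induces $K_{4}$ and no $5$-subset has minimum induced degree $\geq 3$, so no internal bipartition with smaller class of size $4$ or $5$ exists (smaller sizes are impossible a fortiori).

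The main tool is the \emph{parity partition}. Let $H=2\mathbb{Z}_{n}$ be the index-$2$ subgroup of even elements. Since the neighbour $v+s$ of $v$ has the same parity as $v$ iff $s\in H$, each vertex has exactly $|S\cap H|=2\mathbf{1}[a\text{ even}]+2\mathbf{1}[b\text{ even}]+\mathbf{1}[n/2\text{ even}]$ same-parity neighbours. Whenever this count is at least $3$, the partition $(H,\mathbb{Z}_{n}\setminus H)$ is internal. This resolves every case except the three parity-residual families: (A) $\{a,b\}$ of mixed parity with $n\equiv 2\pmod 4$; (B) $a,b$ both odd with $4\mid n$; (C) $a,b$ both odd with $n\equiv 2\pmod 4$ (the bipartite case).

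For families (A) and (B), I would try the \emph{doubled-block} partition $A=[0,k)\cup[n/2,n/2+k)$, which is invariant under translation by $n/2$ so that the involution generator never crosses the cut. A direct vertex-by-vertex inspection shows that for suitably chosen $k\geq 3$ all vertices are internal, except when $n$ is so small that the construction degenerates (precisely the exceptional $\langle 1,2,5\rangle_{10}$). An alternative, useful whenever some proper subgroup $K\leq\mathbb{Z}_{n}$ satisfies $|S\cap K|\geq 3$ (as is the case for instance with $K=\gcd(b,n/2)\mathbb{Z}_{n}$ in $\langle 1,3,6\rangle_{12}$), is the coset partition $(K,\mathbb{Z}_{n}\setminus K)$, which is then automatically internal. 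A small finite list of remaining graphs in (A) and (B) that is not covered by either construction can be verified by hand or by lifting suitable bipartitions from a quotient $\mathbb{Z}_{n}/K$.

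The principal obstacle is family (C), the bipartite case. Here I use the factorization $\mathbb{Z}_{n}\cong\mathbb{Z}_{n/2}\times\mathbb{Z}_{2}$: the graph becomes bipartite between $\mathbb{Z}_{n/2}\times\{0\}$ and $\mathbb{Z}_{n/2}\times\{1\}$, with $(x,0)\sim(y,1)$ iff $y-x\in\{0,\pm a,\pm b\}\pmod{n/2}$. For $n/2=5$ this difference set exhausts $\mathbb{Z}_{5}$ and the graph is $K_{5,5}$. For $n/2\geq 7$ the complementary ``non-edge'' difference set $\mathbb{Z}_{n/2}\setminus\{0,\pm a,\pm b\}$ is nonempty, and I would choose a subset $X\subseteq\mathbb{Z}_{n/2}$ so that the symmetric partition $A=X\times\{0,1\}$ is internal. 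This reduces to ensuring that every translate of $\{0,\pm a,\pm b\}$ meets $X$ and its complement in at least $3$ elements each; for $n/2\in\{7,9\}$ this requires explicit constructions (e.g.\ $X$ an independent set with respect to the missing differences), while for $n/2\geq 11$ a long arithmetic progression or a greedy choice of $X$ suffices.
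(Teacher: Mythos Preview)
Your setup and the parity-partition filter are correct, and the residual classification into families (A), (B), (C) is accurate. The gaps are in the constructions you propose for those families. For (A) and (B) you invoke the doubled-block $A=[0,k)\cup[n/2,n/2+k)$ ``for suitably chosen $k\geq 3$'', but you never verify that the boundary vertices are internal, and with $a,b$ arbitrary there is no reason a single interval length should work uniformly. The paper avoids this difficulty by first using the coset argument you mention to dispose of the case $\gcd(a,n/2)>1$ or $\gcd(b,n/2)>1$, and then---the key step you are missing---applying the multiplicative automorphism $x\mapsto r^{-1}x$ of $\mathbb{Z}_n$ (available once some generator $r$ is coprime to $n$) to reduce to the canonical form $\langle 1,t^*,n/2\rangle_n$. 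With one generator equal to $1$, two explicit $8$-element $3$-cohesive sets (one for $t^*\in\{4,\dots,n/2-4\}$, another for $t^*$ near the ends) are easily checked to work whenever $n\geq 16$, and the finitely many remaining graphs with $n\leq 14$ are handled by a short table. This single reduction covers all three of your families at once; there is no separate bipartite analysis.

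Your treatment of family (C) also contains a concrete error. You require that every translate of $\{0,\pm a,\pm b\}$ meet \emph{both} $X$ and its complement in at least three elements, but that set has only five elements, so the condition is vacuously unsatisfiable. The correct requirement is asymmetric---translates centred at $x\in X$ should meet $X$ in $\geq 3$, and translates centred at $x\notin X$ should meet $X^c$ in $\geq 3$---but even with the right condition, the claim that ``a long arithmetic progression or a greedy choice of $X$ suffices for $n/2\geq 11$'' is not substantiated and is far from obvious for arbitrary $a,b$. In short, the plan is reasonable in spirit, but the missing multiplicative normalization is what turns the problem into a finite, checkable one; without it your block and interval constructions would require substantial additional argument that you have not supplied.
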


Observe first that the order  of the group must be even, $n=2k$. Furthermore, if the  cyclic Cayley graph  has odd valency, then $k$ must be one of the generators and we may suppose that other generators are less than $k$.
We begin with some auxiliary lemmas.

\begin{claim}
\label{rel prime}
Suppose that $(r,2k)=1$ holds for positive integers $r,k$. Then $\langle r, t, k \rangle _{2k}$ is isomorphic to $\langle 1, t^*, k \rangle _{2k}$, where $r \cdot t^* \equiv t\pmod {2k}$.
\end{claim}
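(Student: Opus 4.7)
The plan is to exhibit an explicit graph isomorphism given by multiplication by the inverse of $r$ modulo $2k$. Since $\gcd(r, 2k) = 1$, there is a unique $r^{-1} \in \mathbb{Z}_{2k}$ with $r \cdot r^{-1} \equiv 1 \pmod{2k}$, and by hypothesis the element $t^* \equiv r^{-1} t \pmod{2k}$ is well-defined. I would define $\phi : \mathbb{Z}_{2k} \to \mathbb{Z}_{2k}$ by $\phi(x) = r^{-1} x \pmod{2k}$, and observe immediately that $\phi$ is a bijection because $r^{-1}$ is a unit of $\mathbb{Z}_{2k}$.

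The next step is to verify that $\phi$ carries the edge set of $\langle r, t, k\rangle_{2k}$ to that of $\langle 1, t^*, k\rangle_{2k}$. Since both Cayley graphs arise from translation-invariant connection sets, the criterion is purely that $\phi$ maps the connection set $\{\pm r, \pm t, k\}$ bijectively onto $\{\pm 1, \pm t^*, k\}$, using the linearity $\phi(y) - \phi(x) = r^{-1}(y-x)$. The images of $\pm r$ and $\pm t$ are $\pm 1$ and $\pm t^*$ by construction.

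The only subtle case is the generator $k$. Here I would use the fact that $\gcd(r, 2k) = 1$ forces $r$ to be odd. Hence $r - 1$ is even, so $(r-1)k \equiv 0 \pmod{2k}$, i.e. $rk \equiv k \pmod{2k}$, which is equivalent to $r^{-1} k \equiv k \pmod{2k}$. Therefore $\phi(k) = k$, and the connection set element $k$ is preserved. Putting these computations together, $\phi$ is a bijection preserving adjacency in both directions, hence a graph isomorphism.

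The main obstacle is really just the handling of the self-inverse element $k$: one must notice that the coprimality with $2k$ (rather than merely $k$) forces $r$ to be odd, which is precisely what makes multiplication by $r^{-1}$ fix the element $k \in \mathbb{Z}_{2k}$. Without this observation the argument would fail, since for even $r$ the image $r^{-1}k$ would not equal $k$. Once this point is noted, the remainder is routine.
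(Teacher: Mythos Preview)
Your proof is correct and follows essentially the same approach as the paper: both use multiplication by a unit of $\mathbb{Z}_{2k}$ (you use $r^{-1}$, the paper uses $r$ in the reverse direction) as the isomorphism and check that it carries one connection set to the other. Your treatment is in fact more explicit than the paper's on the one nontrivial point, namely that $r^{-1}k \equiv k \pmod{2k}$ because $r$ is odd; the paper asserts the corresponding fact without spelling it out.
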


\begin{proof}
Let $v_1$ be an element of the vertex set of $\langle 1, t^*, k \rangle _{2k}$, $v_1$ is labeled with $g_1 \in (\mathbb{Z}_n,+)$ and $v_2$ an element of the vertex set of $\langle r, t, k \rangle _{2k}$. Let $v_2$ be assigned to $v_1$, if $v_2$ is labeled with $g_2 = r \cdot g_1$. It is a bijection, because $(r,2k)=1$ and $g_1-g_2 \in \{\pm 1, \pm t^*,k\}$ if and only if $r \cdot g_1- r \cdot g_2 = r \cdot (g_1-g_2) \in \{\pm r, \pm t,k\} \pmod{2k}$, therefore it is an isomorphism between $\langle 1, t^*, k \rangle _{2k}$ and $\langle r, t, k \rangle _{2k}$.
\end{proof}


\begin{claim}
\label{oszthato}
If $(t,k) \neq 1$  then $\langle r, t, k \rangle _{2k}$ has an internal partition.
\end{claim}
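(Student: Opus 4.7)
The plan is to exploit the algebraic structure: since $d := \gcd(t,k) \geq 2$, the three generators $t$, $-t$, and $k$ all lie in the subgroup $d\mathbb{Z}_{2k}$ of $\mathbb{Z}_{2k}$. This subgroup has index $d$ in the ambient group (using that $d \mid k$ implies $d \mid 2k$), and its cosets partition the vertex set into $d$ residue classes modulo $d$, each of cardinality $2k/d$.

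Since $d \geq 2$, I will merge these $d$ cosets into any two non-empty groups $A$ and $B$ to obtain a bipartition of the vertex set, and then verify that every such bipartition is already internal. Fix a vertex $v$ and observe that its neighbors $v+t$, $v-t$, and $v+k$ all satisfy
$$v \pm t \;\equiv\; v \;\equiv\; v + k \pmod{d},$$
since $d$ divides both $t$ and $k$. Consequently, these three neighbors lie in the same coset of $d\mathbb{Z}_{2k}$ as $v$, and therefore on the same side of the partition. Each vertex thus has at least three out of five neighbors in its own class — exactly the internal partition condition — regardless of which side the remaining neighbors $v \pm r$ fall on.

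I do not expect a real obstacle here; the argument is structural rather than computational. The only sanity checks are that the three neighbors $v+t$, $v-t$, $v+k$ are pairwise distinct (immediate from $5$-regularity of $\langle r, t, k\rangle_{2k}$, which forces $t \not\equiv \pm k$ and $2t \not\equiv 0 \pmod{2k}$) and that both $A$ and $B$ are non-empty (guaranteed by $d \geq 2$). Notably, no hypothesis on $r$ is needed, so the claim holds uniformly, even when $\gcd(r, 2k) > 1$ and the graph itself decomposes into several components.
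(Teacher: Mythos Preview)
Your proof is correct and follows essentially the same approach as the paper: both use that the residue classes modulo $d=\gcd(t,k)$ are $3$-cohesive because the generators $\pm t$ and $k$ preserve each class. The only cosmetic difference is that you merge the $d\ge 2$ classes directly into a bipartition (and add the distinctness sanity check), whereas the paper simply exhibits two disjoint internal subsets and invokes the standard extension to a full internal partition.
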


\begin{proof}
Consider the congruence classes of $\{ 1, 2, \ldots , 2k \}$ modulo $(t,k)$. It is easy to check that they are internal subsets of $\langle r, t, k \rangle _{2k}$: if the distance of two elements is $t$ or $k$ then they will be in the same class, hence every vertex degree is at least $3$.

Since $1< (t,k) <k$ (according to $0<t<k$), thus $\frac{2k}{(t,k)}>2$. Therefore, we find two disjoint internal subsets, which completes the proof.
\end{proof}

\begin{proof}[Proof of Theorem \ref{5regcay}]
If $(r,k) \neq 1$ or $(t,k) \neq 1$ then we are done by Claim \ref{oszthato}. In the remaining case, $r$ and $t$ are even integers or without loss of generality we can assume that $(r,2k)=1$. In the first case the vertices with even index will define an internal partition set. In the second case, by Claim \ref{rel prime} it is enough to examine the graphs $\langle 1,t^*,k \rangle_{2k}$.


First, we assume that $k \geq 8$. 
It is easy to check that $$\{ 1,2,t^*+1,t^*+2,k+1,k+2,t^*+k+1,t^*+k+2 \} \mbox{ \ and \ \ }$$ $$\{ 3,4,t^*+3,t^*+4,k+3,k+4,t^*+k+3,t^*+k+4 \}$$  will be a pair of disjoint internal subsets
for $t^* \in \{ 4, \ldots ,k-4\}$ (see Subfigure 1), and  similarly,
$$\{ 1,2,3,4,k,k+1,k+2,k+3 \}\mbox{ \ and \ \ }\{ 5,6,7,8,k+4,k+5,k+6,k+7 \}$$ will be a pair of disjoint internal subsets for $t^* \in  \{2,3, k-3, k-2, k-1\}$  (see Subfigure 2).

\begin{figure}
\centering
\begin{subfigure}{.5\textwidth}
  \centering
  \includegraphics[width=.8\linewidth]{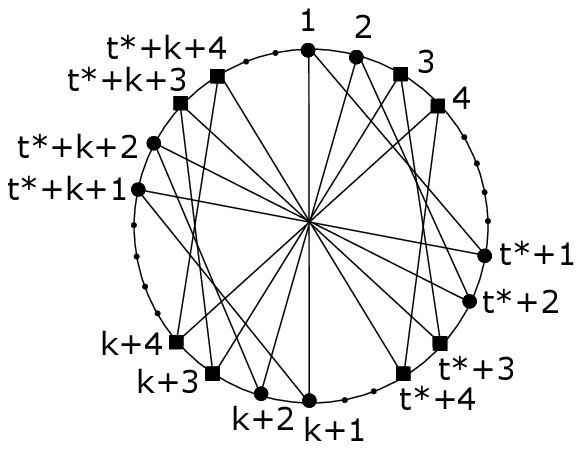}
  \caption{Subfigure 1}
  \label{fig:sub1}
\end{subfigure}%
\begin{subfigure}{.5\textwidth}
  \centering
  \includegraphics[width=.6\linewidth]{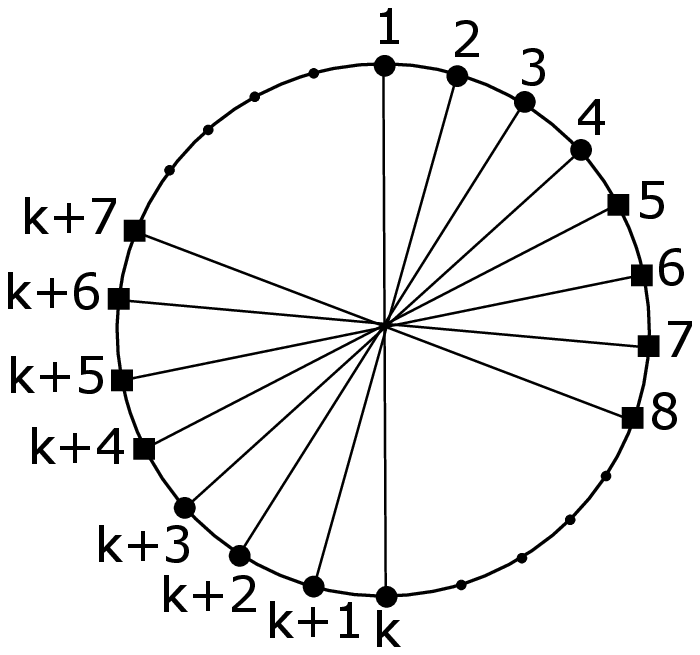}
  \caption{Subfigure 2}
  \label{fig:sub2}
\end{subfigure}
\caption{}
\label{fig:test}
\end{figure}

$\langle 1,2,3 \rangle _{6}$ is $K_6$,
$\langle 1,2,5 \rangle _{10}$ is $P_{2,5}$, 
$\langle 1,3,5 \rangle _{10}$ is $K_{5,5}$, so the  list of Table \ref{tab:cayley} summarizes the remaining cases.\end{proof}


\begin{table}[ht]
\caption{Table of small $5$-reg. Cayley graphs with internal partitions}
\begin{center}
 \begin{tabular}{||c c||}
 \hline
 Example &  Internal sets \\ [0.5ex] 
 \hline\hline

$\langle 1,2,4 \rangle _{8}$ & $\{ 1,3,5,7 \}, \{ 2,4,6,8 \}$ \\ 
 \hline
$\langle 1,3,4 \rangle _{8}$ & $\{ 1,2,5,6 \}, \{ 3,4,7,8 \}$ \\ 
  \hline
$\langle 1,4,5 \rangle _{10}$ & $\{ 1,2,6,7 \}, \{ 3,4,8,9 \}$ \\ 
  \hline
$\langle 1,2,6 \rangle _{12}$ & $\{ 1,2,3,7,8,9 \}, \{ 4,5,6,10,11,12 \}$ \\ 
  \hline
 $\langle 1,3,6 \rangle _{12}$ & $\{ 1,4,7,10 \}, \{ 2,5,8,11 \}$ \\ 
  \hline
$\langle 1,4,6 \rangle _{12}$ & $\{ 1,3,5,7,9,11 \}, \{ 2,4,6,8,10,12 \}$ \\ 
  \hline
$\langle 1,5,6 \rangle _{12}$ & $\{ 1,2,7,8 \}, \{ 3,4,9,10 \}$ \\ 
  \hline
$\langle 1,2,7 \rangle _{14}$ & $\{ 1,2,3,8,9,10 \}, \{ 4,5,6,11,12,13 \}$ \\
 \hline
$\langle 1,3,7 \rangle _{14}$ & $\{ 1,4,5,8,11,12 \}, \{ 3,6,7,10,13,14 \}$ \\
 \hline
$\langle 1,4,7 \rangle _{14}$ & $\{ 1,4,5,8,11,12 \}, \{ 3,6,7,10,13,14 \}$ \\
 \hline
$\langle 1,5,7 \rangle _{14}$ & $\{ 1,2,3,8,9,10 \}, \{ 4,5,6,11,12,13 \}$ \\
 \hline
$\langle 1,6,7 \rangle _{14}$ & $\{ 1,2,3,8,9,10 \}, \{ 4,5,6,11,12,13 \}$ \\  [1ex] 
 \hline
 \end{tabular}\label{tab:cayley}
\end{center}
\end{table}

Based on the cyclic  Cayley graph $P_{2,5}$, it is natural to ask whether there exist cyclic Cayley graphs for each valency $r$, which are different from  $K_{r+1}$ and $K_{r,r}$, furthermore which do not admit an internal partition.

\begin{prop}
For every  even $n>2$ there exists a $(n-3)$-regular cyclic Cayley graph on $n$ vertices which does not contain an internal partition if and only if $n$ is not a power of $2$.

\end{prop}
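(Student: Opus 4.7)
The plan is to apply Claim \ref{(n-3)-regular}, which states that an $(n-3)$-regular graph has an internal partition if and only if its complement has at most one odd cycle. For a cyclic Cayley graph $G=\mathrm{Cay}(\mathbb{Z}_n, S)$ with $|S|=n-3$, the complement is again a cyclic Cayley graph $\bar G = \mathrm{Cay}(\mathbb{Z}_n, T)$ with $T = \mathbb{Z}_n \setminus (S\cup \{0\})$ of size $2$. Since $T = -T$, we have $T = \{a,-a\}$ for some $a \in \mathbb{Z}_n \setminus \{0\}$; the requirement that $\bar G$ be genuinely $2$-regular (not a perfect matching, in which case $G$ would be $(n-2)$-regular) forces $a \neq n/2$. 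Under this assumption the $2$-regular circulant $\bar G$ splits into exactly $d := \gcd(a, n)$ disjoint cycles, each of length $n/d$; so the problem reduces to deciding when one can arrange that $\bar G$ contains at least two odd cycles.

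For the ``if'' direction, I would assume $n$ is not a power of $2$ and write $n = 2^v m$ with $m \geq 3$ odd. Setting $a = 2^v$ gives $d = \gcd(2^v, 2^v m) = 2^v \geq 2$ and $n/d = m$, so $\bar G$ consists of $2^v$ disjoint odd cycles of length $m$. Claim \ref{(n-3)-regular} then implies that the corresponding $(n-3)$-regular cyclic Cayley graph $G$ admits no internal partition, which is the desired counterexample.

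For the ``only if'' direction, suppose $n = 2^v$ and let $a \in \mathbb{Z}_n \setminus \{0, n/2\}$ be arbitrary. Then $d = \gcd(a, n)$ is a power of $2$ belonging to $\{1, 2, \ldots, 2^{v-2}\}$, because $d = n$ is excluded by $a \neq 0$ and $d = n/2$ is excluded by $a \neq n/2$ (indeed, the only $a \in \{1,\ldots,n-1\}$ with $\gcd(a,n) = n/2$ is $a = n/2$ itself). Consequently $n/d \geq 4$ is even, every cycle of $\bar G$ has even length, and $\bar G$ contains no odd cycle at all. By Claim \ref{(n-3)-regular}, $G$ admits an internal partition; since $a$ was arbitrary, no $(n-3)$-regular cyclic Cayley graph on $n$ vertices can serve as a counterexample when $n$ is a power of~$2$.

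The argument rests entirely on the reduction via Claim \ref{(n-3)-regular} and on the well-known cycle decomposition of the $2$-regular circulant $\mathrm{Cay}(\mathbb{Z}_n, \{\pm a\})$, both of which are elementary. Accordingly, I do not foresee a significant obstacle; the only care required is the proper bookkeeping of which $a$ are admissible (ruling out $a = 0$ and $a = n/2$) so that the complement is truly a $2$-regular simple graph.
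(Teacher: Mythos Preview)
Your proof is correct and follows essentially the same route as the paper's: both reduce via Claim~\ref{(n-3)-regular} to the cycle decomposition of the $2$-regular complement $\mathrm{Cay}(\mathbb{Z}_n,\{\pm a\})$ and then argue about the parity of the common cycle length $n/\gcd(a,n)$. Your choice $a=2^v$ in the ``if'' direction is exactly the paper's construction (with the roles of the letters swapped), and your ``only if'' argument is the contrapositive of the paper's; if anything, you are slightly more careful in explicitly excluding $a=n/2$ to guarantee that the complement is genuinely $2$-regular.
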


\begin{proof}
If $n$ is not a power of $2$ then it can be written  of form $n=l \cdot m$, where $l>1$ is odd. Consider the graph $\langle m \rangle _{n}$. It is the union of $m>1$ pieces of cycles of length $l$. Hence by Claim \ref{(n-3)-regular}, there is no internal partition in the complementary of this graph. Therefore we found a $(n-3)$-regular cyclic Cayley graph on $n$ vertices, such that it does not contain an internal partition.

Consider a $(n-3)$-regular cyclic Cayley graph on $n$ vertices, such that it does not contain an internal partition. The complementary of this graph (denoted by $\langle s \rangle _{n}$) is the union of cycles with same the length, and $n$ is divisible by this common length. According to Claim \ref{(n-3)-regular} there is at least $2$ cycles with odd length in $\langle s \rangle _{n}$. Hence $n$ has an odd divisor, therefore $n$ is not a power of $2$.
\end{proof}

\subsection{ Cayley graphs on the group $\mathbb{Z}_{2^t}$}

Let $G=Cay(\mathbb{Z}_2^t;\{g_1, \dots, g_k\})$. Now for all $1 \leq i \leq k$, the edges generated by $g_i$ determine a perfect matching, because all element of $\mathbb{Z}_2^t$ is the negative of himself.

\begin{theorem}
    Let $G=Cay(\mathbb{Z}_2^t;\{g_1, \dots, g_5\})$. Then $G$ has an internal partition.
\end{theorem}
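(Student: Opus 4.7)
The plan is to exhibit a proper subgroup $H$ of $\mathbb{Z}_2^t$ that contains at least three of the generators $g_1,\ldots,g_5$. Once this is done, the partition $(H,\,\mathbb{Z}_2^t\setminus H)$ is internal: for every vertex $v$, the three neighbors $v+g_i$ corresponding to the $g_i\in H$ lie in the same coset of $H$ as $v$, hence on the same side of the partition, supplying the required $\lceil 5/2\rceil=3$ own-side neighbors at $v$. Both parts are nonempty because $H$ is proper.

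For $t\ge 4$ the choice is immediate: set $H:=\langle g_1,g_2,g_3\rangle$. Then $|H|\le 2^3=8<2^t$, so $H$ is proper, and the observation above produces the desired internal partition.

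The only remaining case is $t=3$, which is also the smallest value compatible with $|S|=5$ distinct nonzero elements. Here I would argue by double-counting that some $2$-dimensional subspace of $\mathbb{F}_2^3$ contains three of the generators. There are exactly $7$ two-dimensional subspaces of $\mathbb{F}_2^3$, each containing $3$ nonzero vectors, and by symmetry every nonzero vector of $\mathbb{F}_2^3$ lies in exactly $3$ of them. Counting incidences between the five $g_i$ and the seven subspaces gives $5\cdot 3=15$; if each subspace contained at most two generators, the incidence total would be at most $7\cdot 2=14$, a contradiction. Hence some $2$-dimensional subspace $H$ contains at least three of the $g_i$, and $(H,\,\mathbb{Z}_2^3\setminus H)$ is an internal bisection into two $4$-element sets.

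There is essentially no serious obstacle in the argument: the bulk of cases $t\ge 4$ reduces to the trivial observation $|\langle g_1,g_2,g_3\rangle|\le 8$, while the single small case $t=3$ is dispatched by the above one-line pigeonhole count on $2$-dimensional subspaces of $\mathbb{F}_2^3$. In particular, unlike the cyclic case, no explicit small-order examples need to be excluded.
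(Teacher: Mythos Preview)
Your proof is correct. Both your argument and the paper's hinge on locating a proper subgroup $H\le\mathbb{Z}_2^t$ containing at least three of the generators, but the implementations differ. For $t>3$ the paper first arranges that $g_1,g_2,g_3$ are linearly \emph{independent} (swapping $g_3$ with $g_4$ if $g_3=g_1+g_2$), so that $\langle g_1,g_2,g_3\rangle$ has exactly $8$ elements, and then tiles $G$ by the $2^{t-3}$ cosets, each inducing a $3$-regular cube; your remark that the crude bound $|\langle g_1,g_2,g_3\rangle|\le 8<2^t$ already forces properness---with no independence assumption---and that the two-block partition $(H,\,V\setminus H)$ is itself internal is a small but genuine simplification. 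For $t=3$ the two arguments diverge more visibly: the paper passes to the complement $\overline G$, a $2$-regular Cayley graph on $\mathbb{Z}_2^3$ and hence a union of even cycles, and reads off an internal bisection from its bipartition; your incidence count between the five generators and the seven $2$-dimensional subspaces of $\mathbb{F}_2^3$ is a more direct route to the same index-$2$ subgroup. Your version has the merit of treating both ranges of $t$ under one umbrella (``find a proper subgroup containing three $g_i$''), while the paper's $t=3$ argument has the minor bonus of exhibiting the two sides as induced $K_4$'s.
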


\begin{proof}
    If $t=3$, then the complementary of $G$ is the union of two perfect matching, so it is two-regular and bipartite graph (with the vertex sets $A$ and $B$). $A$ and $B$ induce a $K_4$ in the graph $G$, so they determine an internal partition.
    
    If $t>3$, then consider three generators  $g_1, g_2, g_3$. We can assume that $g_3 \neq g_1+g_2$, otherwise we change $g_3$ and $g_4$. Then $0, g_1, g_2, g_3, g_1+g_2, g_1+g_3, g_2+g_3$ and $g_1+g_2+g_3$ are distinct element and they are connected as shown in  Figure \ref{UW1}.  $G$ thus can be tiled by its subgraph $G'=Cay(\mathbb{Z}_2^t;\{g_1, g_2, g_3\})$, so we found $2^{t-3}$ disjoint internal sets which implies the existence of an internal partition in $G$.
\end{proof}

\begin{figure}[!ht]
  \centering
  \includegraphics[width=.3\linewidth]{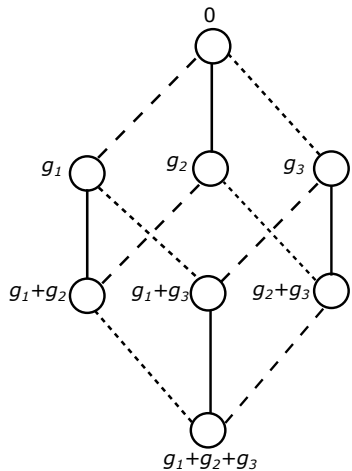}
  \caption{Graph generated by three elements}
  \label{UW1}
\end{figure}

\subsection{Cayley graphs of finite Abelian groups }

We apply the structure theorem of finite Abelian groups and deduce that apart from some Cayley graphs arising from the small cyclic groups, every $5$-regular Cayley graph over a finite Abelian group has an internal partition. First we prove a special case and extend Theorem \ref{5regcay}.

\begin{prop}
\label{5regcay2} 
Suppose that $p>1$ is a positive integer. Then every $5$-regular Cayley graph on the group $\mathbb{Z}_2 \times \mathbb{Z}_{2p}$ has an internal partition.
\end{prop}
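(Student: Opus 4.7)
The plan is to find an internal partition either from a coset decomposition by an index-$2$ subgroup of $G=\mathbb{Z}_2\times\mathbb{Z}_{2p}$, or by lifting an internal partition from the quotient by $\langle s\rangle$, where $s$ is an involution in the connection set $S$. The three involutions of $G$ are $(1,0),(0,p),(1,p)$, which together with $0$ form the $2$-torsion subgroup $T\cong\mathbb{Z}_2\times\mathbb{Z}_2$. Since $|S|=5$ is odd and $S=-S$, the set $S$ contains either $1$ or $3$ involutions. If it contains all three, then the three involutions span a $K_4$ on every coset of $T$, making every coset $3$-cohesive; since $[G:T]=p\ge 2$, splitting the cosets into two nonempty classes yields an internal partition.

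Suppose now $S=\{s,\pm g_1,\pm g_2\}$ with $s$ the unique involution in $S$ and $g_1,g_2$ non-involutions. Let $\pi:G\to G/2G\cong\mathbb{Z}_2\times\mathbb{Z}_2$; the three index-$2$ subgroups of $G$ are the preimages under $\pi$ of the three order-$2$ subgroups of $\mathbb{Z}_2^2$. A pair $\pm g_i\subset K$ contributes $2$ to $|S\cap K|$ while $s\in K$ contributes $1$, so some index-$2$ subgroup $K$ satisfies $|S\cap K|\ge 3$ iff some hyperplane of $\mathbb{Z}_2^2$ contains both $\pi(g_1),\pi(g_2)$, or contains $\pi(s)$ together with some $\pi(g_i)$. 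Whenever such a $K$ exists, $K$ and its nontrivial coset are both $3$-cohesive, which immediately produces an internal partition. A short enumeration shows this fails precisely when $\pi(s),\pi(g_1),\pi(g_2)$ are the three distinct nonzero elements of $\mathbb{Z}_2^2$.

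In that remaining subcase I would pass to $\tilde G:=G/\langle s\rangle$, a group of order $2p$, endowed with the Cayley multigraph $\mathrm{Cay}(\tilde G;\{\pm\tilde g_1,\pm\tilde g_2\})$ where $\tilde g_i$ is the image of $g_i$. One first checks $\tilde g_i\ne 0$ and $\tilde g_i\ne-\tilde g_i$, using that $g_i$ is not an involution and $2g_i\notin\langle s\rangle$ (a short coordinate check in each of the three possibilities $s\in\{(1,0),(0,p),(1,p)\}$). If also $\tilde g_1\ne\pm\tilde g_2$, the quotient is a $4$-regular simple graph on $2p\ne 5$ vertices, and by Theorem~\ref{kisr} it admits an internal partition $\tilde V_1\cup\tilde V_2$. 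If instead $\tilde g_1=\pm\tilde g_2$, the underlying simple graph is a $2$-regular Cayley graph whose components are cycles of length $\ge 3$; partitioning one long cycle into two contiguous arcs (or grouping whole shorter cycles) gives $\tilde V_1\cup\tilde V_2$ in which each vertex has at least one own-class underlying neighbor, equivalently at least two own-class neighbors counted with multiplicity. Lifting to $V_j:=\bigcup_{v\in\tilde V_j}\bigl(v+\langle s\rangle\bigr)$ produces $\langle s\rangle$-invariant sets, so the $s$-neighbor of every $v\in V_j$ lies in $V_j$; combined with the at least two own-class $g_i$-neighbors inherited from the quotient, every vertex of $G$ has at least three own-class neighbors, and $\{V_1,V_2\}$ is the desired internal partition.

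The hardest part is the multi-edge subcase $\tilde g_1=\pm\tilde g_2$, where Theorem~\ref{kisr} cannot be invoked directly; one must analyse the explicit cyclic structure of $\langle\tilde g_1\rangle\le\tilde G$ and verify that every cycle in the underlying $2$-regular graph has length at least $3$, a consequence of $\tilde g_i$ not being an involution in $\tilde G$. The routine but crucial arithmetic ingredient is the verification that $2g_i\notin\langle s\rangle$ in each of the configurations of $s$, which rules out the only ways $\tilde g_i$ could become an involution after passing to the quotient.
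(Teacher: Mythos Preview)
Your proof is correct and follows essentially the same strategy as the paper: split according to whether $S$ contains three involutions or one, use the $K_4$ coset decomposition in the former case, and in the latter case pass to the quotient $G/\langle s\rangle$ and invoke the $4$-regular case of Theorem~\ref{kisr}, then lift. The two arguments differ only in bookkeeping. You insert a preliminary search for an index-$2$ subgroup $K$ with $|S\cap K|\ge 3$, which disposes of several subcases uniformly before quotienting; the paper instead treats the three possible involutions $s$ separately and (for $s=(0,p)$ or $(1,p)$) appeals to the cyclic case. Your handling of the degenerate quotient $\tilde g_1=\pm\tilde g_2$ via partitioning the cycles of the underlying $2$-regular Cayley graph is more explicit than the paper's one-line ``$K_4$'' remark for $q=r$; both are easy but yours is cleaner. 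Note also that your verification of $2g_i\notin\langle s\rangle$ needs no coordinate check: in the residual subcase $\pi(s)\ne 0$, so $s\notin 2G$, while $2g_i\in 2G$ automatically.
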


\begin{proof}
    Let $G=Cay(\mathbb{Z}_2 \times \mathbb{Z}_{2p};S)$, $S=\{g_1, \dots, g_k\}$. Let $T=\{(1,0);(0,p); (1,p)\}$. If $g_i \in T$, then the edges generated by $g_i$ determine a perfect matching, because they are their own negatives. If $g_i \notin T$, then the edges generated by $g_i$ determine the union of disjoint cycles. So either $|S \cap T|=3$ and $|S \setminus T|=1$ or $|S \cap T|=1$ and $|S \setminus T|=2$.
    
    Suppose that $|S \cap T| = 3$. Then for all $0 \leq q < p$ the set $\{(*,q); (*,q+~p):  * \in \mathbb{Z}_2\}$ induces a complete graph $K_4$, i.e., a 3-cohesive set. These are disjoint subgraphs for all choices of $q$, so we found two disjoint internal sets.
    
    Now suppose that $|S \cap T|=1$ and $|S \setminus T|=2$ holds. In the first case let $g_1=(1,0)$ and denote the second coordinate of $g_2$ and $g_3$ by $q$ and $r$ with $q\leq r<p$ and $ * \in \mathbb{Z}_2$. If $q=r$, we obtain again induces $K_4$ graphs in the Cayley graph thus we are done. Otherwise consider the graph $G'=Cay(\mathbb{Z}_{2p};\{q;r\})$. It is $4$-regular and it is not the complete graph $K_5$  as $G'$ has $2p$ vertices, so it has an internal partition in view of Theorem \ref{kisr}. We denote this partition  by $A' \cup B'$. Let $A=\{(*,a)\  | \ * \in \mathbb{Z}_2; a \in A'\}$ and $B=\{(*,b) \ |\  * \in \mathbb{Z}_2; b \in B'\}$. Then $A \cup B$ is an internal partition of $G$.
    
    This method works similarly in the other case $g_1\in \{(0,p);((1,p)\}$ after considering  $G'=Cay(\mathbb{Z}_{2p};\{q;r;p\})$.
\end{proof}

\begin{theorem} Every $5$-regular Cayley graph arising from an Abelian group admits an internal partition except for three graphs, described in Theorem \ref{5regcay}.
\end{theorem}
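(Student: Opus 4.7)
The plan is to combine the structure theorem for finite Abelian groups with the results already established in this section. Let $A$ be a finite Abelian group, $T \subset A$ the set of involutions (elements of order $2$), and $G = Cay(A, S)$ with $|S| = 5$. Since non-involution elements of $S$ come in pairs $\{s, -s\}$, parity forces $|S \cap T| \in \{1, 3, 5\}$. I may assume that $S$ generates $A$: otherwise $G$ is a disjoint union of at least two isomorphic $5$-regular Cayley graphs on $\langle S \rangle$, and placing one component in one class and the rest in the other trivially gives an internal partition.

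If $|S \cap T| = 5$, then $A$ is generated by involutions, so $A \cong \mathbb{Z}_2^t$ and the preceding theorem on Cayley graphs of $\mathbb{Z}_2^t$ applies. If $|S \cap T| = 3$, set $H := \langle S \cap T\rangle \subseteq T \cup \{0\}$; since the three involutions in $S$ are distinct, either $H \cong \mathbb{Z}_2^3$ with the corresponding edges inducing a cube $Q_3$ on each coset, or $t_1 + t_2 + t_3 = 0$ and $H \cong \mathbb{Z}_2^2$ inducing a $K_4$ on each coset. In either case every coset of $H$ spans a $3$-regular subgraph of $G$, and since $S \not\subseteq T$ one has $[A : H] \geq 2$; splitting the cosets into two nonempty families yields an internal partition.

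The substantive case is $|S \cap T| = 1$. Write $S = \{s_0, s_1, -s_1, s_2, -s_2\}$ with $s_0 \in T$ and project to $\bar A := A / \langle s_0\rangle$. In the generic sub-case, when $2 s_i \neq s_0$ for $i = 1, 2$ and $s_1 \pm s_2 \neq s_0$, the images $\pm \bar s_1, \pm \bar s_2$ are four distinct nonzero elements of $\bar A$, so $Cay(\bar A, \{\pm \bar s_1, \pm \bar s_2\})$ is a $4$-regular Cayley graph. By Theorem \ref{kisr} this quotient admits an internal partition $\bar A = \bar U \cup \bar V$ unless it is $K_5$; the exception forces $|A| = 10$, hence $A \cong \mathbb{Z}_{10}$ is cyclic and Theorem \ref{5regcay} applies. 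Lifting the partition to the preimages $A = U \cup V$, every vertex $v$ inherits at least two same-class neighbors among $v \pm s_1, v \pm s_2$ from the quotient, while $v + s_0$ lies in the same $\langle s_0\rangle$-coset and hence in the same class, giving at least three same-class neighbors in total. The degenerate sub-cases reduce to earlier results: if $2 s_1 = s_0$ (symmetrically for $s_2$), then $\langle s_1\rangle \cong \mathbb{Z}_4$ and $\{s_0, s_1, -s_1\} = \langle s_1\rangle \setminus \{0\}$, so each coset of $\langle s_1\rangle$ induces a $K_4$ and $s_2 \notin \langle s_1\rangle$ guarantees $[A : \langle s_1\rangle] \geq 2$; if instead $s_2 \in \{s_1 + s_0, -s_1 + s_0\}$, then $S \subset \langle s_0, s_1\rangle = A$, whence either $A$ is cyclic (handled by Theorem \ref{5regcay}) or $A \cong \mathbb{Z}_2 \times \mathbb{Z}_{2p}$ for some $p$ (covered by Proposition \ref{5regcay2}). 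The main obstacle is this final bookkeeping: every way the $\langle s_0\rangle$-projection can lose information must be shown to collapse $A$ into a family already covered, and combining all cases shows that the only remaining counterexamples lie in cyclic groups, pinned down by Theorem \ref{5regcay} to $K_6$, $K_{5,5}$, and $\langle 1, 2, 5\rangle_{10}$.
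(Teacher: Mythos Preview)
Your proof is correct but follows a different route from the paper in the decisive case $|S\cap T|=1$. The paper picks a non-involution $g_1\in S$, examines the index of $\langle g_1\rangle$ in $\mathcal G$, and either recognises $\mathcal G$ as cyclic or $\mathbb Z_2\times\mathbb Z_{2p}$ (small index) or builds two disjoint $3$-cohesive cosets of $\langle g_1,g_2\rangle$ (large index), where $g_2$ is the unique involution in $S$. You instead quotient by $\langle s_0\rangle$, obtain in the generic situation a genuine $4$-regular Cayley graph on $\bar A$, invoke the $r=4$ case of Theorem~\ref{kisr}, and lift the resulting internal partition; this is exactly the mechanism of Proposition~\ref{5regcay2} carried out in full generality rather than only for $\mathbb Z_2\times\mathbb Z_{2p}$. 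The price you pay is the degeneracy bookkeeping (the projection collapsing $\{\pm\bar s_1,\pm\bar s_2\}$), but each collapse either produces $K_4$-cosets or forces $A=\langle s_0,s_1\rangle$, which is cyclic or $\mathbb Z_2\times\mathbb Z_{2p}$, so the fallback to Theorem~\ref{5regcay} and Proposition~\ref{5regcay2} is clean. Both approaches are short; yours leans more directly on the known $4$-regular classification, while the paper's coset argument is more self-contained and bypasses the quotient degeneracies.
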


\begin{proof}
Let $\mathcal{G}$ be a finite Abelian group. Consider a $5$-regular Cayley graph $Cay(\mathcal{G}, S)$ of $\mathcal{G}$ and let $\mathcal{G}_{(2)}$ denote its subgroup generated by the elements of order at most two.\\
In the first case, suppose that  $|S \cap \mathcal{G}_{(2)}|\geq 3$. This implies we have $3$ distinct generators $g_1, g_2, g_3\in S$ of order $2$.  Then each coset of $\langle g_1, g_2, g_3 \rangle$ induces a $3$-regular subgraph on at most $8$ vertices thus we are done provided that  $|\mathcal{G}|>8$. Groups of smaller order are already considered above.\\
In the second case, we have  $|S \cap \mathcal{G}_{(2)}|< 3$, which in turn implies $|S \cap \mathcal{G}_{(2)}|=1$ by the parity of the valency of $Cay(\mathcal{G}, S)$. Then there exists $g_1\in S \setminus \mathcal{G}_{(2)}$.\\
$\langle g_1 \rangle=\mathcal{G}$ would imply that  $\mathcal{G}$ is cyclic, which case is covered already in Theorem \ref{5regcay}. Now suppose that $|\langle g_1 \rangle|=|\mathcal{G}|/2$. Then $\mathcal{G}$ must be either a cyclic group or a direct product of $\mathbb{Z}_2$ and a cyclic group. These subcases are already covered by Theorem \ref{5regcay} and \ref{5regcay2}. Finally, suppose that $|\langle g_1 \rangle|<|\mathcal{G}|/2$. The cosets of  $\langle g_1 \rangle$ determine cycles in the Cayley graph. Let us take  $g_2:=S\cap \mathcal{G}_{(2)}$-t and consider $\langle g_1, g_2\rangle$. The cosets of this subgroup induce $3$-regular graphs, moreover $|\langle g_1, g_2\rangle|\in \{|\langle g_1\rangle|, 2|\langle g_1\rangle| \}$. As a consequence, we find at least two disjoint $3$-regular subgraphs. 
\end{proof}

\section{Concluding remarks}

We presented an approach how to show the existence of cohesive sets which have rather small intersection. Although our main theorem \ref{main} does not provide a bound close enough to the desired result $o(1)$, the applied technique pinpoints several subproblems of independent interest in which any breakthrough would imply an improvement for the bound of main Theorem \ref{main} as well.

We  pose this list of problems below.

\begin{problem} Improve the bound of Ban and Linial, Lemma \ref{banlinial} by showing the existence of $\lceil r/2 \rceil$-cohesive sets in $r$-regular  $n$-vertex graphs on much less than $n/2$ vertices, subject to $n\gg r$.
\end{problem}
Note for example that if one could show the existence of a $3$-regular $H$ subgraph of the $5$-regular graph $G$ on less than $n/3=|V(G)|/3$ vertices, that would provide a straightforward  application of the Alon-Friedland-Kalai theorem.
It would be really interesting to find an analogue of the Alon--Friedland--Kalai theorem \ref{AFK} with a restriction on the size of the subgraph as well. That would enable us to easily find dense subgraphs with the prescribed maximum degree, at least for certain values of the maximum degree $d$.

In a more general form, we formalize

\begin{problem} Determine the best possible $\lambda_{r,t}$ constant, depending on $r$ and $t$, for which the following holds.
Let $G$ be a $r$-regular bipartite graph on $n$ vertices. Then there is a subgraph $H\subset G$ on at most $(\lambda_{r,t}+o(1)) n$ vertices with minimum degree $\delta(H)\geq t$.
\end{problem}

\begin{problem} Prove a common generalisation of Theorem \ref{AFK} and Theorem \ref{Lisa} which fixes the degree sequence of the subgraph and guarantee many $0$-degrees.
\end{problem}

It would be interesting to obtain a general lower bound $f(k)$ on the edge cardinality which can be guaranteed in at least one $k$-vertex subgraph of $n$-vertex graphs with a prescribed maximum degree condition. We showed that under the conditions of Proposition \ref{minfokos}, $f(k)\geq k-1$ holds for every $k\leq n$, and subsection 3.2 presents a possible way how to improve that bound at least when $k$ is not small compared to $n$.

\begin{problem} Improve and generalize the result of Proposition \ref{minfokos} by obtaining a lower bound function on the cardinality of the edges of $k$-vertex subgraphs having a given bound on the maximum degree.
\end{problem}

\begin{problem}
Prove that every Paley graph has an internal partition.
\end{problem}

\begin{problem}
Prove that almost all $5$-regular graphs have an internal partition via improving the algorithmic approach of Proposition \ref{n/2} and applying Theorem \ref{DSW}.
\end{problem}

\bibliographystyle{abbrv}
\bibliography{bib}

\end{document}